\DeclareOldFontCommand{\rm}{\normalfont\rmfamily}{\mathrm}
\def\F{\mathbb F}
\def\G{\mathcal G}
\def\ad{\operatorname{ad}}
\def\Der{\operatorname{Der}}
\def\dim{\operatorname{dim}}
\def\Ker{\operatorname{Ker}}
\def\Id{\operatorname{Id}}
\def\Span{\operatorname{Span}}
\def\Im{\operatorname{Im}}
\def\g{\mathfrak g}
\def\gl{\mathfrak{gl}}
\def\h{\mathfrak h}
\def\a{\mathfrak{a}}
\def\G{\mathfrak{G}}
\theoremstyle{plain}\swapnumbers
\newtheorem{Theorem}{Theorem}[section]
\newtheorem{Prop}[Theorem]{Proposition}
\newtheorem{Cor}[Theorem]{Corollary}
\newtheorem{Remark}[Theorem]{Remark}
\title[]{On quadratic Hom-Lie algebras 
with twist maps in their centroids
and their relationship with quadratic Lie algebras}
\author[R. Garc\'ia-Delgado, G. Salgado, O.A. S\'anchez-Valenzuela]{R. Garc\'ia-Delgado$^{(1)}$, G. Salgado$^{(2)}$
and O.A. S\'anchez-Valenzuela$^{(1)}$}
\address{(1) Centro de Investigaci\'on en Matem\'aticas, A.C., Unidad M\'erida;
Yucat\'an CP 97302, M\'exico.}
\email{rosendo.garcia@cimat.mx, rosendo.garciadelgado@alumnos.uaslp.edu.mx}
\email{adolfo@cimat.mx}
\address{(2) Fac. de Ciencias, UASLP, 
Av. Parque Chapultepec 1570, Priv. del Pedregal,
San Luis Potos\'{\i}, S.L.P. CP 78295 M\'exico.}
\email{gsalgado@fciencias.uaslp.mx, gil.salgado@gmail.com}
\keywords {Hom Lie algebras; Quadratic Lie algebras; Central extensions; 
Simple algebras}
\subjclass{
Primary:
17Bxx, 17B60.  %%% Lie algebras
Secondary:
17B20, 17B30, 17B40%%% 
}
\date{\today}
\begin{document}

\begin{abstract}
Hom-Lie algebras having non-invertible twist maps
in their centroids are studied. 
Central extensions of Hom-Lie algebras having these properties
are obtained and shown how the same properties are preserved. 
Conditions are given so that the produced central extension 
has an invariant metric with respect to its Hom-Lie product 
making its twist map self-adjoint when the original 
Hom-Lie algebra has such a metric. 
This work is focused on algebras with these properties and 
following Benayadi and Makhlouf
we call them quadratic Hom-Lie algebras. 
It is shown how a quadratic Hom-Lie algebra gives rise to a quadratic 
Lie algebra and that the Lie algebra associated to the given Hom-Lie
central extension is a Lie algebra central extension of it. 
It is also shown that if the
Hom-Lie product is not a Lie product, there exists 
a non-abelian algebra, which is in general non-associative too,
the commutator of whose product is precisely the Hom-Lie product
of the Hom-Lie central extension. 
Moreover, the algebra whose commutator realizes
this Hom-Lie product is shown to be simple if the
associated Lie algebra is nilpotent.
Non-trivial examples are provided.
\end{abstract}

\maketitle

\section*{Introduction}

\medskip
All algebras and vector spaces in this work
are assumed to be finite dimensional and defined 
over an algebraically closed field $\F$ of characteristic zero.

\medskip
A {\it Hom-Lie algebra\/} over a field $\F$
is a triple $(\g,\mu,T)$ consisting of a vector space $\g$,
a skew-symmetric bilinear product $\mu:\g\times\g\to\g$
and a linear map $T:\g\to\g$, called {\it the twist map\/,} satisfying:
\begin{equation}\label{primera Hom-Lie Jacobi identity}
\mu(T(x),\mu(y,z))+\mu(T(y),\mu(z,x))+\mu(T(z),\mu(x,y))=0,
\end{equation}
for all $x,y$ and $z$ in $\g$. 

\medskip
The algebraic study of Hom-Lie algebras is usually traced back to the
work of Hartwig,  Larsson  and  Silvestrov (see \cite{HLS})
in studying deformations of the Witt and Virasoro algebras.
We shall refer our readers to the references provided
by Benayadi and Makhlouf in \cite{Ben}, 
and specially to the Introduction of their work, 
where a rather complete and comprehensive
summary was given for the various classes of Hom-structures that had
been studied, at least up to 2014 when \cite{Ben} was written.
Since then, the study of Hom-structures has grown fastly, covering
a wide range of new interesting issues
(see for example \cite{Cai}, \cite{MandZ}, \cite{MandN}, 
\cite{MandS}, \cite{WendeLiu}, or \cite{Z}).

\medskip
The spirit of our work is similar to that of \cite{Ben},
in that we also construct quadratic Hom-Lie algebras
and study their connections with quadratic Lie algebras. 
The main differences are that we do not start with a Lie algebra $(\g,\mu)$, 
and our results are focused on {\it non-invertible twist maps\/} $T$
which are furthermore {\it centroids of\/} $(\g,\mu)$
(see for example \cite{Luks} and \cite{Melville}); that is,
we shall assume throughout this work that,
\begin{equation}\label{propiedad de equivariancia en Hom-Lie algebras}
T\left(\mu(x,y)\right)=\mu\left(T(x),y\right),\quad \text{ for all }\ x,y \in \g.
\end{equation}
It is immediate to show that if $T$ is an invertible centroid,
then $\mu$ is a Lie bracket. That is why we shall assume that $\dim\Ker(T)>0$, 
and when be referred to a pair $(\g,\mu)$, the bilinear map 
$\mu:\g\times\g\to\g$ will only be assumed to be skew-symmetric.

\medskip
We are also interested in Hom-Lie algebras $(\g,\mu,T)$ 
with $T$ satisfying the hypotheses above 
and $\g$ equipped with a symmetric, 
non-degenerate bilinear form $B:\g \times \g \to \F$, 
satisfying,
$$
\aligned
B(\mu(x,y),z) 
& =
B(x,\mu(y,z)),
\qquad\text{and}\\
B(T(x),y)
& =
B(x,T(y)),
\quad\qquad\text{for all\ }x,y,z\in \g.
\endaligned
$$
Such a $B$ will be referred to as {\it a $\mu$-invariant metric\/} on $(\g,\mu,T)$
and, following \cite{Ben}, we shall say that $(\g,\mu,T, B)$ is a {\it quadratic Hom-Lie algebra\/.} 
If $T=\operatorname{Id}_{\g}$, such a $4$-tuple $(\g,\mu,T,B)$ is just a {\it quadratic Lie algebra\/.}
It is shown in \cite{Ben} that the notion of double extension for Lie algebras
can be generalized and carried out on quadratic Hom-Lie algebras
whose twist map is an invertible morphism.
By way of contrast,
what we do in this work is to characterize the quadratic Hom-Lie algebras 
satisfying the more relaxed hypotheses just described above for the twist map
$T$ and the $\mu$-invariant form $B$, using central extensions of quadratic Lie algebras.

\medskip
A few words are worth our while to answer the question of, {\it why looking
at quadratic Hom-Lie algebras with non-invertible centroids as twist maps?\/}
It is known that the space of self-adjoint centroids for a $\mu$-invariant metric $B$
on $\g$ plays an important role in the theory of quadratic Lie algebras, since
any pair of invariant metrics are related by an invertible self-adjoint centroid.
Also, Schur's Lemma states that for simple Lie algebras, the 
space of self-adjoint centroids is one-dimensional, which leads to the well known
fact that any invariant metric on a simple Lie algebra must be a non-zero scalar
multiple of the Cartan-Killing form. 
In \cite{Benayadi}, quadratic Lie algebras are studied under the hypothesis
that its space of self-adjoint centroids is 2-dimensional and some interesting 
consequences of this fact are obtained there. For example, it is proved
that all such Lie algebras are {\it local\/;} that is, they have a unique
maximal ideal.
Thus, after our findings in \cite{GSV},
it was natural for us to try to understand some of the characteristics
shown by quadratic Hom-Lie algebras under the more general conditions
assumed for them in this work.

\medskip
Here is now a more detailed
description of what we do here.
We show first in \textbf{Prop.\,\ref{proposicion cero}}
and \textbf{Thm.\,\ref{teorema nuevo 2}}
that a quadratic Hom-Lie algebra $(\g,\mu,T,B)$
having $T$ in its centroid
and $\Ker(T)\ne\{0\}$, 
gives rise to a quadratic Lie algebra
$(\G,[\,\cdot\,,\,\cdot\,],B_{\G})$ which is in turn a central extension
by $V$ of a quadratic Lie algebra $(\g,[\,\cdot\,,\,\cdot\,],B)$
having $[\,\cdot\,,\,\cdot\,]=T\circ\mu$, 
making $V$ isotropic under $B_{\G}$
and $\dim V=\dim\Ker(T)$.
In \textbf{Prop.\,\ref{teorema nuevo 3}} we show how to
extend the original Hom-Lie algebra $(\g,\mu,T)$
to a Hom-Lie algebra $(\G,\mu_{\G},L)$. A fundamental role
is played by the maps $k:\g\to\G$ and $h:\G\to\g$
defined in \eqref{definicion de k} and \eqref{definicion h}, respectively.
The result also makes use of the Lie algebra invariant metric
$B_{\G}$ and shows that: (1) the twist map $L$ is $B_{\G}$-self-adjoint,
(2) $L$ restricted to $\g$ is exactly $k$, and (3) the projection map $\G\to\g$
that mods out the central ideal $V$ actually yields a
Hom-Lie algebra epimorphism $(\G,\mu_{\G},L)\to(\g,\mu,T)$.
It is then natural to say that $(\G,\mu_{\G},L)$
\textbf{is a central extension of $(\g,\mu,T)$ as Hom-Lie algebras},
since $\mu_{\G}(\G,V)=\{0\}$ and $L$ vanishes on $V$
as shown in \textbf{Prop.\,\ref{teorema nuevo 3}}.
As a consequence of these results we show in
\textbf{Cor.\,\ref{corolario cociclos}}
that if $B_{\G}$ is $\mu_{\G}$-invariant or 
if $L$ is a centroid for $\mu_{\G}$,
then the $2$-cocycle used to define the central extension
$\G=\g\oplus V$ has to be identically zero. It is also proved that if
$\mu$ is not a Lie product,
then $\Ker(T)$
is equal to the center of the Lie algebra $(\g,[\,\cdot\,,\cdot\,])$ 
(see \textbf{Prop.\,\ref{Prop cociclos}}).

\medskip
At this point we make a general observation in
\textbf{Prop.\,\ref{construccion}} that produces a quadratic Hom-Lie algebra
$(\g,\mu,T,B)$ on the underlying space $\g=\h\oplus\h^*$, 
with $T$ in its centroid, when a Hom-Lie algebra $(\h,\nu,S)$
is given, having $S$ in its centroid.
In fact, $T(x+\alpha)=S(x)+S^*(\alpha)$, and 
$B(x+\alpha,y+\beta)=\alpha(y)+\beta(x)$, 
for any $x$ and $y$ in $\h$ and any $\alpha$ and $\beta$ in $\h^*$. 
With this construction we are able to exhibit an example of a 
$6$-dimensional Hom-Lie algebra  $(\h,\nu,S)$ for which 
$\nu$ is not a Lie bracket and we also describe explicitly the 
$12$-dimensional quadratic Hom-Lie algebra $(\g,\mu,T,B)$ 
based on $\g=\h\oplus\h^*$.

\medskip
In \textbf{Thm.\,\ref{teorema nuevo 2}}
we start from a quadratic Hom-Lie algebra 
$(\g,\mu,T,B)$, with $T$ in its centroid and $\Ker T\ne\{0\}$, to
produce the quadratic Lie algebra 
$(\G=\g\oplus V,[\,\cdot\,,\,\cdot\,]_{\G},B_{\G})$ 
where 
$V$ is isotropic, $[\G,V]_{\G}=\{0\}$
and $\dim V=\dim\Ker(T)$. 
Then, in \textbf{Prop.\,\ref{proposicion conexion}} we show 
that there is a bilinear product in $\G$,
$(x,y)\mapsto xy$ satisfying,
$$
\aligned
\mu_{\G}(x,y)&=xy-yx,\qquad\text{and},
\\
2xy &= \mu_{\G}(x,y)-[h(x),y]_{\G}+[x,h(y)]_{\G}.
\endaligned
$$
\medskip
Now, one of the goals
that originally motivated this work was to
construct (and if possible, to characterize) quadratic Hom-Lie 
algebras with
twist maps in their centroids
{\it when the underlying Hom-Lie product is not a Lie algebra bracket\/.}  
Using the product $(x,y)\mapsto xy$ in $\G$ above
we attain this goal in  \textbf{Thm.\,\ref{teorema}} through the
product defined on $\mathcal{A}=\mathbb F \times\G$ by,
$$
(\xi,x)\,(\eta,y)=(\,\xi\, \eta+B_{\G}(x,y)\,,\,\xi\, y+\eta\, x+x\,y\,),
$$
for all $\xi,\eta$ in $\F$ and all $x,y$ in $\G$.
It turns out that
$\mathcal{A}$ is, in general, a non-associative algebra
with unit element $1_{\mathcal{A}}=(1,0)$,
and if $\mu$ is not a Lie product
then $\mathcal{A}$  has no non-trivial two-sided ideals.
Moreover, $\mathcal{A}$ turns out to be simple
when the Lie algebra $(\g,[\,\cdot\,,\,\cdot\,])$ is nilpotent.
It is also shown  in  \textbf{Thm.\,\ref{teorema}}
that the induced product on 
$\mathcal{A}^{\prime}=\mathcal{A}/\mathbb F\,1_{\mathcal{A}}$
is given by,
$$
\mu^{\prime}\left((\xi,x)+\F 1_{\mathcal{A}},(\eta,y)+\F 1_{\mathcal{A}}\right)=
(\xi,x)(\eta,y)-(\eta,y)(\xi,x)+\F\, 1_{\mathcal{A}},
$$
for all $\xi,\eta$ in $\F$ and $x,y$ in $\G$. 
Then, defining
$T^{\prime}:\mathcal{A}^{\prime} \to \mathcal{A}^{\prime}$ 
by  $T^{\prime}((\xi,x)+\F 1_{\mathcal{A}}))=(\xi,L(x))+\F 1_{\mathcal{A}}$,
one obtains a  Hom-Lie algebra isomorphism
$\left(\mathcal{A}^{\prime},\mu^{\prime},T^{\prime}\right)\to\left(\G,\mu_{\G},L \right)$, 
into the Hom-Lie algebra of \textbf{Prop.\,\ref{teorema nuevo 3}},
thus also recovering the Hom-Lie algebra structure on $(\g,\mu,T)$.

\medskip
To round up our results
we conclude by showing
in \textbf{Thm.\,\ref{teorema extensiones centrales}} 
---using some results from \cite{GSV}---
that if one starts with a quadratic Lie algebra 
$(\g,[\,\cdot\,,\,\cdot\,],B)$ and a central extension
$(\G=\g\oplus V,[\,\cdot\,,\,\cdot\,]_{\G})$
with $\dim V>0$ that admits an invariant metric $B_{\G}$
under which $V$ is isotropic, then there exists a
quadratic Hom-Lie algebra $(\g,\mu,T,B)$
with $T$ in its centroid such that,
$[\,\cdot\,,\,\cdot\,]=T\circ\mu$ and $\dim\Ker(T)=\dim V$.

\medskip
We close this work giving a non-trivial example of how to start
with a $6$-dimensional quadratic Lie algebra $(\g,[\,\cdot\,,\,\cdot\,],B)$, then
show how to produce the $9$-dimensional 
Lie algebra central extension
$(\G=\g\oplus V,[\,\cdot\,,\,\cdot\,]_{\G}, B_{\G})$
having $\dim V=3$,
and finally how to produce the $6$-dimensional quadratic Hom-Lie algebra
$(\g,\mu,T,B)$ with $T$ 
in its centroid,
and the $9$-dimensional
Hom-Lie algebra $(\G,\mu_{\G},L)$ with the help of the simple algebra $\mathcal{A}$.

\section{
Characterization of Hom-Lie algebras with twist maps in their centroids}

\medskip
We start by showing how to obtain a quadratic Lie algebra
from a quadratic
Hom-Lie algebra $(\g, \mu, T, B)$ having
$T$ in the centroid.

\medskip
\begin{Prop}\label{proposicion cero}{\sl
Let $(\g, \mu, T, B)$ be a quadratic Hom-Lie algebra
with $T$ in its centroid.
Let $[\,\cdot\,,\,\cdot\,]:\g\times\g\to\g$ be defined by $[x,y]=T\left(\mu(x,y)\right)$, 
for all $x$ and $y$ in $\g$. 
Then, $(\g,[\,\cdot\,,\,\cdot\,],B)$ is a quadratic Lie algebra 
and $T([x,y])=[T(x),y]$, for all $x$ and $y$ in $\g$\/.}
\end{Prop}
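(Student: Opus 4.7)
The plan is to extract a single technical identity from the equivariance hypothesis and use it to reduce every claim to a short calculation. Specifically, combining $T(\mu(x,y))=\mu(T(x),y)$ with the skew-symmetry of $\mu$ immediately yields the twin identities $\mu(T(x),y)=\mu(x,T(y))=T(\mu(x,y))$, so $T$ can be pushed into or pulled out of either slot of $\mu$. I would record this as a one-line observation at the start of the proof, since it is invoked repeatedly.

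With this in hand, the compatibility $T([x,y])=[T(x),y]$ is immediate: both sides unfold to $T^{2}(\mu(x,y))$.

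For the Jacobi identity of $[\,\cdot\,,\,\cdot\,]$, I would compute $[x,[y,z]]=T(\mu(x,T(\mu(y,z))))$ and use the key identity to absorb the inner $T$ into $\mu$, obtaining $[x,[y,z]]=T^{2}(\mu(x,\mu(y,z)))$. Summing cyclically yields $T^{2}$ applied to the ordinary Jacobiator of $\mu$. On the other hand, applying the key identity slot-by-slot to \eqref{primera Hom-Lie Jacobi identity} rewrites the Hom-Jacobi axiom as exactly $T$ applied to the same Jacobiator; hence that Jacobiator lies in $\Ker(T)$ and the desired cyclic sum for $[\,\cdot\,,\,\cdot\,]$ vanishes after one more application of $T$.

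Finally, the $[\,\cdot\,,\,\cdot\,]$-invariance of $B$ follows by chaining the self-adjointness of $T$, the $\mu$-invariance of $B$, and one application of the key identity: $B([x,y],z)=B(\mu(x,y),T(z))=B(x,\mu(y,T(z)))=B(x,T(\mu(y,z)))=B(x,[y,z])$. I anticipate no genuine obstacle; the main pitfall is to resist slipping in $T^{-1}$ anywhere, since the whole thrust of the paper depends on $T$ being singular. Note that the hypothesis $\dim\Ker(T)>0$ plays no role in this proposition; it will matter only for the subsequent constructions.
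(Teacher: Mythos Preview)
Your proof is correct and follows essentially the same line as the paper's, relying throughout on the identity $\mu(T(x),y)=\mu(x,T(y))=T(\mu(x,y))$. The only cosmetic difference is in the Jacobi step: the paper writes $[x,[y,z]]=T\bigl(\mu(T(x),\mu(y,z))\bigr)$ and applies \eqref{primera Hom-Lie Jacobi identity} directly, whereas you pass through $T^{2}$ of the ordinary $\mu$-Jacobiator and $\Ker(T)$---one step longer, but the content is identical.
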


\medskip
\begin{proof}
Define $[\,\cdot\,,\,\cdot\,]$ by means of $T\circ\mu$.
Since $T$ is a centroid,
$T([x,y])=T(\mu(T(x),y))=[T(x),y]$. 
It also follows that, 
$$
\left[x,[y,z]\right]=T\left(\mu\left(T(x),\mu(y,z)\right)\right),
$$
for any $x$, $y$ and $z$ in $\g$.
Then, the Jacobi identity for $[\,\cdot\,,\,\cdot\,]$ follows from 
 \eqref{primera Hom-Lie Jacobi identity} and 
the linearity of $T$. 
Since $T$ is $B$-self-adjoint,
the $\mu$-invariance of $B$ leads to
$B\left([x,y],z\right)=B\left(x,[y,z]\right)$,
proving that $(\g,[\,\cdot\,,\,\cdot\,],B)$ is a quadratic Lie algebra.
\end{proof}

\medskip
\textbf{Observe} that the construction of the Lie algebra
$(\g,[\,\cdot\,,\,\cdot\,])$ in \textbf{Prop.\,\ref{proposicion cero}} 
is similar to the one given in \cite{Ben} (\textbf{Prop.\,1.8} there).

\medskip
Now, the hypothesis $\dim\Ker(T)=r>0$ 
makes it possible to associate another Hom-Lie algebra to the 
quadratic Hom-Lie algebra $\left(\g,\mu,T,B\right)$;
say, $(\G,\mu_\G,L)$. Our first result in that direction shows
how to build up the underlying vector space $\G$ 
and how it actually has the structure of a quadratic Lie algebra.

\medskip
\begin{Theorem}\label{teorema nuevo 2}{\sl
Let $\left(\g, \mu, T, B\right)$ be 
a quadratic Hom-Lie algebra with $T$ 
in its centroid
and $\dim\Ker(T)=r>0$. 
For any $r$-dimensional vector space $V$,
$\G=\g \oplus V$ inherits a quadratic Lie algebra structure 
$\left(\G,[\,\cdot\,,\,\cdot\,]_{\G},B_{\G} \right)$, such that,

\medskip
\begin{itemize}
\item [\textbf{(i)}] 
$V$ is contained in its center $C(\G)$, and

\medskip
\item [\textbf{(ii)}] 
$B_{\G}$ makes $V$ isotropic; {\it ie\/,} for all $u$ and $v$ in $V$, $B_{\G}(u,v)=0$\/.
\end{itemize}
}
\end{Theorem}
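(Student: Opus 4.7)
The plan is to realize $\G=\g\oplus V$ as a central extension of the quadratic Lie algebra $(\g,[\,\cdot\,,\,\cdot\,],B)$ furnished by \textbf{Prop. \ref{proposicion cero}}, and then equip it with an invariant, non-degenerate symmetric bilinear form $B_{\G}$ that restricts to $B$ on $\g$ and makes $V$ totally isotropic.

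First I would record two consequences of the hypotheses that drive everything. Since $T$ is $\mu$-equivariant, for $x\in\Ker T$ and any $y\in\g$ one has $[x,y]=T(\mu(x,y))=\mu(T(x),y)=0$, so $I:=\Ker T$ is a central ideal of $(\g,[\,\cdot\,,\,\cdot\,])$; in particular $I\subseteq C(\g)=[\g,\g]^{\perp_{B}}$, the latter equality being the standard one for a quadratic Lie algebra. Since $T$ is also $B$-self-adjoint, $(\Im T)^{\perp_{B}}=\Ker T$, so $[\g,\g]=T(\mu(\g,\g))\subseteq\Im T$ and therefore $\dim(\g/[\g,\g])\geq\dim\g-\dim\Im T=r=\dim V$.

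Next, the Lie bracket on $\G=\g\oplus V$ is supplied by the trivial central extension: declare $V$ an abelian ideal and set $[x+v,\,y+w]_{\G}:=[x,y]$ for $x,y\in\g$ and $v,w\in V$. The Jacobi identity is inherited from $(\g,[\,\cdot\,,\,\cdot\,])$, and $V\subseteq C(\G)$ by construction, so (i) is immediate. For the metric I would choose a linear map $\theta:\g\to V^{*}$ vanishing on $[\g,\g]$ (possible by the dimension count) and define
\[
B_{\G}(x+v,\,y+w)\;:=\;B(x,y)+\theta(y)(v)+\theta(x)(w).
\]
This form is symmetric, restricts to $B$ on $\g$, and makes $V$ totally isotropic, so (ii) holds. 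Invariance reduces, since $V$ is central and $\theta([\g,\g])=0$, to the $[\,\cdot\,,\,\cdot\,]$-invariance of $B$: when all three arguments lie in $\g$ one recovers $B([x,y],z)=B(x,[y,z])$, and when one argument lies in $V$ both sides vanish. Non-degeneracy of $B_{\G}$ is equivalent (by a Schur-complement computation on the block matrix) to non-degeneracy of the symmetric bilinear form on $V$ given by
\[
\Phi(v,w)\;:=\;B\bigl((B^{\flat})^{-1}\theta^{*}(v),\,(B^{\flat})^{-1}\theta^{*}(w)\bigr),
\]
where $B^{\flat}:\g\to\g^{*}$ is the isomorphism $x\mapsto B(x,\cdot\,)$. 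The image of $\iota:=(B^{\flat})^{-1}\theta^{*}:V\to\g$ automatically lies in $[\g,\g]^{\perp_{B}}=C(\g)$, because $\theta$ annihilates $[\g,\g]$.

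The main obstacle is to exhibit a $\theta$ for which $\Phi$ is non-degenerate; equivalently, to produce an $r$-dimensional subspace of $C(\g)$ transverse to the radical $C(\g)\cap[\g,\g]$ of $B|_{C(\g)}$, and then to define $\theta$ so that $\iota(V)$ agrees with it under $B^{\flat}$. This is where the interplay between $I\subseteq C(\g)$, the bound $\dim C(\g)\geq r$, and the self-adjointness of $T$ does the work: these constraints, together with the global non-degeneracy of $B$ on $\g$, are precisely what guarantee the existence of such a transverse subspace. If a direct attempt with the trivial cocycle were to fail in some degenerate corner, one would twist the extension by a suitable $2$-cocycle $\omega:\g\times\g\to V$ and modify $\theta$ accordingly, preserving both invariance and centrality while recovering non-degeneracy.
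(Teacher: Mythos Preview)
Your approach has a genuine gap: the requirement $B_{\G}|_{\g\times\g}=B$ is too restrictive and makes non-degeneracy of $B_{\G}$ impossible in general. As you correctly note, invariance forces $\theta([\g,\g])=0$, so $\iota(V)\subseteq[\g,\g]^{\perp_B}=C(\g)$, and the Schur complement reduces non-degeneracy of $B_{\G}$ to non-degeneracy of $\Phi(v,w)=B(\iota(v),\iota(w))$. Thus you need an $r$-dimensional subspace of $C(\g)$ on which $B$ is non-degenerate. But this can fail outright: in the paper's own example (Section~4), $\g=\a\oplus{\frak b}$ is the $6$-dimensional $2$-step nilpotent quadratic Lie algebra with $[a_i,a_{\sigma(i)}]=b_{\sigma^2(i)}$ and $B(a_i,b_j)=\delta_{ij}$, and the twist map satisfies $T(a_j)=b_j$, $T(b_j)=0$. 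Here $r=3$ and $\Ker(T)=C(\g)=[\g,\g]={\frak b}$, which is totally $B$-isotropic; hence $\Phi\equiv 0$ for every admissible $\theta$, and $(-\iota(v),v)$ lies in the radical of $B_{\G}$ for every $v\in V$. Your escape hatch does not rescue this: twisting the bracket by a $2$-cocycle $\omega$ leaves the block form of $B_{\G}$ untouched, and invariance with the twisted bracket still forces $\theta([\g,\g])=0$ (set one argument in $V$), so the same degeneracy persists.

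The paper's proof works precisely because it \emph{drops} the condition $B_{\G}|_{\g\times\g}=B$ (it even remarks explicitly that $B$ is not the restriction of $B_{\G}$). It fixes a complement $\a$ to $\Im(T)$ in $\g$, declares both $\a$ and $V$ isotropic for $B_{\G}$ and dually paired via $B_{\G}(a_i,v_j)=\delta_{ij}$, and on $\Im(T)$ sets $B_{\G}(T(x),T(y))=B(T(x),y)$; see \eqref{definicion de metrica en extension central}. Correspondingly, the bracket on $\G$ is the non-trivial central extension $[x,y]_{\G}=k(\mu(x,y))=[x,y]+\sum_i B(\mu(a_i,x),y)\,v_i$, built from the injective map $k(x)=T(x)+\sum_i B(a_i,x)\,v_i$ of \eqref{definicion de k}. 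The cocycle comes from the Hom-Lie product $\mu$ itself, and the correct relation between the two metrics is $B(x,y)=B_{\G}(k(x),y)$ rather than restriction. This pairing of $\a$ with $V$ is the key idea your proposal is missing.
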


\medskip
\begin{proof}
Let $\a \subset \g$ be a complementary subspace to $\Im(T)$. 
Then, $\dim\a=\dim\Ker(T)=r$.
Let $\{\,a_1,\ldots,a_r\,\mid\,a_i\in\g\,\}$ be a basis of $\a$
and let $\{v_1,\ldots,v_r\,\mid\,v_i\in V\,\}$ be a basis of $V$.

\medskip
\textbf{Claim A1.} {\it There are: (1) an injective linear map $k:\g \to \G$, 
and, (2) a skew-symmetric bilinear map $[\,\cdot\,,\,\cdot\,]_{\G}$ in $\G$,
such that $k\left([x,y]\right)=\left[k(x),y\right]_{\G}$ for all $x$ and $y$ in $\g$.}

\medskip
Indeed. Define the linear map $k:\g \to \G$ by,
\begin{equation}\label{definicion de k}
k(x)=T(x)+B(a_1,x)v_1+\ldots+B(a_r,x)v_r,\quad \text{ for all }x \in \g.
\end{equation}
This is injective because $B$ is non-degenerate, $T$ is $B$-self-adjoint, 
and $\Ker(T) \cap \a^{\perp}=\{0\}$.

\medskip
Let $[\,\cdot\,,\,\cdot\,]_{\G}:\G \times \G \to \G$ be the bilinear map defined by:
\begin{equation}\label{corchete de extension central-1}
\aligned
\,[x,y]_{\G}&=k\left(\mu(x,y)\right),\qquad\text{ for all }x,y \in \g,
\\
\,[x+u,v]_{\G}&=0,\qquad\qquad\qquad\text{ for all }x \in \g\ \text{ and }u,v \in V.
\endaligned
\end{equation}
Since $k(x)-T(x)$ belongs to $V$ for any $x$ in $\g$, 
and since $[V,\G]_{\G}=\{0\}$, it follows 
from \eqref{corchete de extension central-1}
that for any $x$ and $y$ in $\g$,
\begin{equation}
k([x,y])=k(\mu(T(x),y))=[T(x),y]_{\G}=[k(x),y]_{\G}\,.
\end{equation}

\medskip
\textbf{Claim A2.} {\it $[\,\cdot\,,\,\cdot\,]_{\G}$ is a Lie bracket in $\G$\/.}

\medskip
Let $x,y,z$ be in $ \g$ and $u,v,w$ be in $V$. By \textbf{Claim A1} and \eqref{corchete de extension central-1} we have:
$$
\aligned
\,\left[x+u,[y+v,z+w]_{\G}\right]_{\G}&=\!\left[x,[y,z]_{\G}]_{\G}\!=\![x,k(\mu(y,z))\right]_{\G}\\
\,&=k\left(\mu(x,T(\mu(y,z)))\right)=k\left(\mu\left(T(x),\mu(y,z)\right)\right)\\
\endaligned
$$
Then the Hom-Lie Jacobi identity for $\mu$ implies that 
$[\,\cdot\,,\,\cdot\,]_{\G}$ satisfies the Jacobi identity. 
Thus, $\left(\G,[\,\cdot\,,\,\cdot\,]_{\G}\right)$ is a $(n+r)$-dimensional Lie algebra.

\medskip
\textbf{Claim A3.}
{\it There exists a non-degenerate symmetric bilinear form 
$B_{\G}$ on 
$\G=\g \oplus V=\a \oplus \Im T \oplus V$
under which $\g^{\perp}=\a$ and $V$ is isotropic\/.}

\medskip
Let $B_{\G}:\G \times \G \to \F$ be defined  by,
\begin{equation}\label{definicion de metrica en extension central}
\begin{split}
& B_{\G}\left(T(x),T(y)\right)=B(T(x),y),\quad \text{ for all }x,y \in \g,\\
& B_{\G}\left(\a \oplus V,\Im(T)\right)=B_{\G}(\a,\a)=B_{\G}(V,V)=\{0\},\\
& B_{\G}\left(a_i,v_j\right)=\delta_{ij},\quad\quad \text{ for all }1 \leq i,j \leq r.
\end{split}
\end{equation}
Clearly $B_{\G}$ is non-degenerate. 
Since $\g=\a \oplus \Im(T)$, it follows that,
\begin{itemize}
\item[\textbf{(P1)}] $\g^{\perp}=\{x \in \G\mid B_{\G}(x,y)=0,\,\text{ for all }y \in \g\}=\a$;
\medskip
\item[\textbf{(P2)}] $\Im(T) \subset V^{\perp}$ in $\G$.
\end{itemize}

\medskip
\textbf{Claim A4.} {\it $\Im k=V^{\perp}$\/.}

\medskip
It follows from \eqref{definicion de metrica en extension central}
that $V^{\perp}=\Im(T) \oplus V$. 
From the definition of $k$ (see \eqref{definicion de k}), 
it is clear that $B_{\G}\left(\Im(k),V\right)=\{0\}$. Thus, $\Im(k) \subset V^{\perp}$. 
Since $k$ is injective and $\dim\Im(k)=\dim V^{\perp}$, 
we conclude that $\Im(k)=V^{\perp}$. 

\medskip
\textbf{Claim A5.} {\it The following relation between $B$ and $B_{\G}$ holds true\/:}
\begin{equation}\label{relacion entre las metricas con k}
B(x,y)=B_{\G}(k(x),y+u),\,\,\text{ for all }x,y \in \g \text{ and any }\,u \in V.
\end{equation}
\medskip
To prove \eqref{relacion entre las metricas con k} we recall
first from \eqref{definicion de metrica en extension central} that
$B_{\G}\left(a_i,v_j\right)=\delta_{ij}$ for $1\!\leq i,j\!\leq r$.
Now, using the definition of $k$ (see \eqref{definicion de k}) 
and the fact that $\a=\g^\perp$ in $\G$ 
(\textbf{(P1)} above),
we see that for any $y$ in $\g$,
$$
B_{\G}(a_i,k(y))=B_{\G}\left(a_i,T(y)+B(a_1,y)v_1+\cdots+B(a_r,y)v_r\right)=B(a_i,y),
$$
for all $1 \leq i \leq r$. 
Therefore,
\begin{equation}\label{a1}
B_{\G}(a,k(y))= B(a,y), \,\,\text{ for all } a \in \a \text{ and }y \in \g.
\end{equation}
Now, for $x$ in $\g$ we know that
$k(x)-T(x) \in V \subset V^{\perp}$ (see \eqref{definicion de k}).
By \textbf{(P2)} we also know that $\Im(T) \subset V^{\perp}$.
Then, for any $x$ and $y$ in $\g$ we have,
\begin{equation}\label{a2}
B_{\G}(T(x),k(y))=B_{\G}(T(x),T(y))=B(T(x),y).
\end{equation}
Combining \eqref{a1} and \eqref{a2}, we conclude that 
$B_{\G}(a+T(x),k(y))=B(a+T(x),y)$,
for any $x$ and $y$ in $\g$ and $a$ in $\a$.
Since $\g=\a \oplus \Im(T)$ we deduce that $B(x,y)=B_{\G}(x,k(y))$ for all $x$ and $y$ in $\g$. 
The symmetry of both $B$ and $B_{\G}$ leads to $B(x,y)=B_{\G}(k(x),y)=B_{\G}(x,k(y))$. 
Finally, from \textbf{Claim A4} we have $\Im(k)=V^{\perp}$. 
Therefore, $B(x,y)=B_{\G}(k(x),y+u)$, for all $x,y$ in $\g$ and $u$ in $V$.

\medskip
\textbf{Claim A6.} {\it $B_{\G}$ is invariant under $[\,\cdot\,,\,\cdot\,]_{\G}$\/.}

\medskip
Let $x,y,z$ be in $\g$ and $u,v,w$ be in $V$. 
We shall prove first that $B_{\G}\left([x+u,y+v]_{\G},z+w\right)=B\left(\mu(x,y),z\right)$. In fact,
$$
\aligned
B_{\G}\left([x+u,y+v]_{\G},z+w\right)
&=
\!\!B_{\G}\left([x,y]_{\G},z+w\right)
\!=\!\!
B_{\G}\left(k(\mu(x,y)),z+w\right)\\
\,&=B_{\G}\left(k(\mu(x,y)),z\right)=B(\mu(x,y),z),
\endaligned
$$
where use has been made of
$V \subset C(\G)$, $k \left(\mu(x,y)\right)=[x,y]_{\G}$ 
(see \eqref{corchete de extension central-1}), and $\Im(k)=V^{\perp}$,
together with \eqref{relacion entre las metricas con k}.
A similar argument proves that $B_{\G}\left(x+u,[y+v,z+w]_{\G}\right)=B(x,\mu(y,z))$. 
Finally, the $\mu$-invariance of $B$ implies that $B_{\G}$ is invariant under 
$[\,\cdot\,,\,\cdot\,]_{\G}$. Therefore $\left(\G,[\,\cdot\,,\,\cdot\,],B_{\G}\right)$
is a quadratic Lie algebra and
\textbf{observe that $B$ is not the restriction of $B_{\G}$ to $\g \times \g$,} 
as $V$ is isotropic. 
\end{proof}

\medskip
Within the context of 
a quadratic Hom-Lie algebra $(\g, \mu, T, B)$
with $T$ a centroid,
we have the following relationship between \textbf{Prop.\,\ref{proposicion cero}} 
and \textbf{Thm.\,\ref{teorema nuevo 2}}.

\medskip
\begin{Prop}\label{Prop cociclos}{\sl
Let $(\g,[\,\cdot\,,\,\cdot\,],B)$ be the quadratic Lie algebra of 
\textbf{Prop.\,\ref{proposicion cero}} 
and let $(\G,[\,\cdot\,,\,\cdot\,]_{\G},B_{\G})$
be that of \textbf{Thm.\,\ref{teorema nuevo 2}}. Then,

\medskip
\textbf{(i)} $\G$ is a central extension of $\g$ by $V$

\medskip
\textbf{(ii)} 
For each $1 \leq i \leq r$, let $D_i=\mu(a_i,\,\cdot\,):\g \to \g$, 
where $\g=\Im(T) \oplus \a$ and $\a=\Span_{\F}\{a_1,\ldots,a_r\}$. 
If all the $D_i$'s are inner derivations of the Lie algebra $(\g,[\,\cdot\,,\,\cdot\,])$, 
then the Hom-Lie bracket $\mu$ satisfies the Jacobi identity.

\medskip
\textbf{(iii)} 
If $\mu$ does not satisfy the Jacobi identity, 
then $\Ker(T)$ is equal to the center $C(\g)$ of the Lie algebra $(\g,[\,\cdot\,,\,\cdot\,])$\/.}
\end{Prop}

\medskip
\begin{proof}

\textbf{(i)} 
Let $x$ and $y$ be in $\g$, and let $u$ and $v$ be in $V$.
By \textbf{Prop.\,\ref{proposicion cero}}, 
the Lie bracket $[\,\cdot\,,\,\cdot\,]$ in $\g$ 
is given by $[x,y]=T(\mu(x,y))$.
Now use \eqref{definicion de k}, the $\mu$-invariance of $B$
and \eqref{corchete de extension central-1} to get,
\begin{equation}\label{corchete de ext central en terminos del corchete de Lie en g}
\begin{split}
[\,x+u\,,\,& y+v\,]_{\G}=k(\mu(x,y))\\
\,&=[x,y]+B\left(\mu(a_1,x),y\right)v_1+\ldots+B\left(\mu(a_r,x),y \right)v_r.
\end{split}
\end{equation}
For each $x\in \g$, consider the linear map $\ad_{\mu}(x)=\mu(x,\,\cdot\,):\g \to \g$. 
Since $T$ is a centroid for $(\g,\mu)$, it follows from 
\eqref{primera Hom-Lie Jacobi identity} that,
$$
\ad_{\mu}(x)\left([y,z]\right)=[\ad_{\mu}(x)(y),z]+[y,\ad_{\mu}(x)(z)],\ \ \text{ for all }z \in \g;
$$
that is, $\ad_{\mu}(x)$ is a derivation of the Lie algebra 
$(\g,[\,\cdot\,,\,\cdot\,])$. Since $B$ is invariant under $\mu$, it follows that
$\ad_{\mu}(x)$ is actually an anti-self-adjoint derivation with respect to $B$; {\it ie\/,}
$B\left(\ad_{\mu}(x)(y),z\right)=-B\left(y,\ad_{\mu}(z)\right)$.
Now, let $D_i=\ad_{\mu}(a_i)$, with $a_i$ running through 
the basis of $\a$ given in \textbf{Thm.\,\ref{teorema nuevo 2}}.
From \eqref{corchete de ext central en terminos del corchete de Lie en g}, 
the map $\theta:\g \times \g \to V$ defined by, 
$\theta(x,y)=B\left(D_1(x),y\right)v_1+\cdots+B\left(D_r(x),y\right)v_r$,
gives rise to a skew-symmetric bilinear map satisfying,
\begin{equation}\label{expresion para corchete de extension central 2}
\begin{split}
\,[x+u,y+v]_{\G}&=[x,y]+B\left(D_1(x),y\right)v_1+\cdots+B\left(D_r(x),y\right)v_r\\
\,&=[x,y]+\theta(x,y),\ \text{ for all }x,y \in \g \text{ and }u,v \in V.
\end{split}
\end{equation}
That is, $\theta$ is actually
the  2-cocycle of the Lie algebra $(\g,[\,\cdot\,,\,\cdot\,])$, with values in $V$
that yields $[\,\cdot\,,\,\cdot\,]_{\G}$,
thus proving the statement.

\medskip
\textbf{(ii)}
Since $T$ is a centroid, \eqref{primera Hom-Lie Jacobi identity} implies that,
\begin{equation}\label{MX}
\mu(T(x),\mu(y,z))+\mu(y,\mu(z,T(x)))+\mu(z,\mu(T(x),y))=0,
\end{equation}
for all $x$, $y$ and $z$ in $\g$. That is, the Jacobi identity holds true for 
$\mu$ if at least one of the elements involved in the cyclic sum belongs to $\Im(T)$. 
As $\g=\Im(T) \oplus \a$, it remains to verify whether or not the Jacobi identity also 
holds true for the elements in $\a$.
Observe that if
each $D_i=\mu(a_i,\,\cdot\,)$ is an inner derivation of the Lie algebra 
$(\g,[\,\cdot\,,\,\cdot\,])$ then, for each $1\le i\le r$, there exists 
an element $b_i$ in $\g$ such that $D_i(x)=[b_i,x]$, 
which is equivalent to $\mu(a_i,x)=\mu(T(b_i),x)$ for all $x$ in $\g$.  
Let $a$ and $a^{\prime}$ be in $\a$. Since $T$ is a centroid, we get,
\begin{equation}\label{casos}
{\aligned
\mu(a_i,\mu(a,a^{\prime})) & =\mu(T(b_i),\mu(a,a^{\prime})),\\
\mu(a,\mu(a^{\prime},a_i)) & =\mu(a,\mu(a^{\prime},T(b_i))),\quad\ \text{and,}\\
\mu(a^{\prime},\mu(a_i,a)) & =\mu(a^{\prime},\mu(T(b_i),a)),\quad\ \text{for all}\ 1 \le i \le r.
\endaligned}
\end{equation}
We saw before if at least one of the elements involved in the cyclic sum lies 
in $\Im(T)$, then the Jacobi identity holds true for $\mu$. 
Since $\a$ is generated by $a_1,\ldots ,a_r$, 
\eqref{MX} and \eqref{casos} imply that,
$$
\mu(a,\mu(a^{\prime},a^{\prime \prime}))+\mu(a^{\prime},\mu(a^{\prime \prime},a)+\mu(a^{\prime \prime},\mu(a,a^{\prime}))=0,\,\text{ for all }a,a^{\prime},a^{\prime \prime} \in \a.
$$
Therefore, $\mu$ satisfies the Jacobi identity.

\medskip 
\textbf{(iii)} 
It follows from \textbf{Thm.\,\ref{teorema nuevo 2}} that $V$ 
is an isotropic ideal of the quadratic Lie algebra
$(\G,[\,\cdot\,,\,\cdot\,]_{\G},B_{\G})$ which, by construction, 
is a central extension of $(\g,[\cdot\,,\cdot\,])$ by $V$ 
with cocycle $\theta(x,y)=B(D_1(x),y)v_1+\ldots+B(D_r(x),y)v_r$. 
By hypothesis, if $\mu$ does not satisfy the Jacobi identity, 
it follows from \textbf{(ii)} that at least one of the derivations $D_i$ is not inner.

\medskip
It was proved in \cite{GSV}, (see \textbf{Lemma\,2.2} and \textbf{Prop. 2.5.(ii)} there),
that under these conditions, for each $i$ there exists an $x_i$ in $\g$ 
such that, $D^{\prime}_i=D_i+[x_i,\,\cdot\,]$, and 
$\displaystyle{\cap_{i=1}^r\Ker(D^{\prime}_i)}=\{0\}$. 
Let $\a^{\prime}=\operatorname{Span}\{a_1+T(x_1),\ldots,a_r+T(x_r)\}$,
then $\g=\a \oplus \Im(T)=\a^{\prime} \oplus \Im(T)$
and $D^{\prime}_i=\mu(a_i,\,\cdot\,)+\mu(T(x_i),\,\cdot\,)
=D_i+[x_i,\,\cdot\,]$, for all $1\le i\le r$. 
Thus, the results obtained so far are valid for both 
$\a$ and $\a^{\prime}$. 
In addition, if the 2-cocylce $\theta^{\prime}$ arising from 
$D^{\prime}_1,\ldots,D^{\prime}_r$ differs from $\theta$ 
by a coboundary, then the central extensions in $\G$ 
associated to $\theta$ and $\theta^{\prime}$, are isomorphic. 
Therefore we might as well assume that $\displaystyle{\cap_{i=1}^r\Ker(D_i)}=\{0\}$.
Then, by \eqref{expresion para corchete de extension central 2}, 
the center of $(\G,[\,\cdot\,,\,\cdot\,]_{\G})$ is,
 $$
 C(\G)=C(\g) \cap \left\{\left(\cap_{i=1}^r\Ker(D_i)\right) \oplus V\right\}.
 $$
Whence, $C(\G)=V$ and $V^{\perp}=[\G,\G]_{\G}$.
By \textbf{\emph{Claim A4}}, $k$ satisfies $\Im(k)=V^{\perp}$ 
and $k\vert_{\g}=T$ (see \eqref{definicion de k}). 
If $\pi:\G\to\g$ is the canonical projection of the central extension,
we have, $\pi\left([\G,\G]_{\G}\right)=[\g,\g]$. It then follows that,
$$
\Im(T)=\pi(\Im(k))=\pi(V^{\perp})=\pi\left([\G,\G]_{\G}\right)=[\g,\g].
$$
Finally, since $T$ is $B$-self-adjoint, it follows that $\Ker(T)=C(\g)$.
\end{proof}

\medskip
The next result shows how to extend the
Hom-Lie algebra $(\g,\mu,T)$ to a Hom-Lie algebra
$(\G,\mu_{\G},L)$.

\medskip
\begin{Prop}\label{teorema nuevo 3}{\sl
Let $(\g,\mu,T,B)$ be a quadratic Hom-Lie algebra
with $T$ a centroid
and let $(\G,[\,\cdot\,,\,\cdot\,]_{\G}, B_{\G})$
be the Lie algebra in
\textbf{Thm.\,\ref{teorema nuevo 2}}.
Let $k:\g \to \G$ be the map defined in \eqref{definicion de k}\/.
Then, the vector space $\G=\g \oplus V$
has a Hom-Lie algebra structure
$\left(\G,\mu_{\G},L\right)$, where $\mu_{\G}(x+u,y+v)=\mu(x,y)$, with,
\begin{equation}\label{restriccion k}
L(x+u)=k(x),\,\,\text{ for all }x,y \in \g,\,\text{ and }u,v \in V,
\end{equation}
\begin{equation}\label{dieciseis}
\,\text{satisfying,}\ \ 
B_{\G}(L(x+u),y+v)=B_{\G}(x+u,L(y+v))=B(x,y),
\end{equation}
for all $x,y$ in $\g$ and $u,v$ in $V$. Moreover,
the projection map
$\pi:\G \to \g$ of the central extension,
also yields a Hom-Lie algebra epimorphism 
$(\G,\mu_{\G},L)\twoheadrightarrow(\g,\mu,T)$\/.}
\end{Prop}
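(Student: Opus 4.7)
The plan is to verify each claimed property of $(\G,\mu_{\G},L)$ in turn, reducing every assertion either to the Hom-Jacobi identity of the original $(\g,\mu,T)$ or to Claim A5 of the proof of \textbf{Thm.\ \ref{teorema nuevo 2}}. The maps $\mu_{\G}$ and $L$ are well defined: skew-symmetry of $\mu_{\G}$ comes from that of $\mu$, and $L=k\circ\pi_{\g}$ is linear by construction, which also shows that $L$ coincides with $k$ on $\g$ and vanishes on $V$. The crucial observation to carry through the proof is that for any $X=x+u$ and $Y=y+v$ in $\G$ one has $\mu_{\G}(L(X),Y)=\mu(T(x),y)$, because $k(x)-T(x)\in V$ and $\mu_{\G}$ sees only the $\g$-components of its arguments.

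With this in hand, the Hom-Jacobi identity \eqref{primera Hom-Lie Jacobi identity} for $(\G,\mu_{\G},L)$ evaluated on $X=x+u$, $Y=y+v$, $Z=z+w$ collapses to
$$\mu(T(x),\mu(y,z))+\mu(T(y),\mu(z,x))+\mu(T(z),\mu(x,y))=0,$$
which is precisely \eqref{primera Hom-Lie Jacobi identity} for $(\g,\mu,T)$ and so holds by hypothesis. The self-adjointness relations \eqref{dieciseis} are a one-line consequence of Claim A5 of the previous theorem: from $B(x,y)=B_{\G}(k(x),y+u)$ one reads off $B_{\G}(L(x+u),y+v)=B(x,y)$, and the symmetric identity $B_{\G}(x+u,L(y+v))=B(x,y)$ follows by interchanging the arguments via the symmetry of $B_{\G}$ together with the symmetry of $B$.

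For the epimorphism claim, the identities $\pi_{\g}(\mu_{\G}(X,Y))=\mu(x,y)=\mu(\pi_{\g}(X),\pi_{\g}(Y))$ and $\pi_{\g}(L(X))=\pi_{\g}(k(x))=T(x)=T(\pi_{\g}(X))$ are immediate from the definitions of $\mu_{\G}$, $L$ and $k$, and surjectivity is built into the direct-sum decomposition $\G=\g\oplus V$. The only delicate point, and the one to keep in mind throughout, is that even though $L$ lands in $\G$, its $V$-component $\sum_{i}B(a_{i},x)v_{i}$ is silently discarded by $\mu_{\G}$ and by $\pi_{\g}$; once this is recognized, every verification becomes mechanical and uses only facts already established in \textbf{Thm.\ \ref{teorema nuevo 2}}.
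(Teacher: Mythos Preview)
Your proof is correct and follows essentially the same route as the paper's own argument: both reduce the Hom-Jacobi identity for $(\G,\mu_{\G},L)$ to that of $(\g,\mu,T)$ via the observation $k(x)-T(x)\in V$, invoke Claim~A5 (equation \eqref{relacion entre las metricas con k}) for the self-adjointness relation \eqref{dieciseis}, and check the epimorphism property of $\pi_{\g}$ directly from the definitions. Your explicit description $L=k\circ\pi_{\g}$ and the remark that the $V$-component of $L$ is invisible to both $\mu_{\G}$ and $\pi_{\g}$ make the mechanism slightly more transparent, but the substance is the same.
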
 

\medskip
\begin{proof}
Let $x,y,z$ be in $\g$ and let $u,v,w$ be in $V$. 
From the definition of $\mu_{\G}$ we deduce that $\mu_{\G}(\G,V)=\{0\}$. Then,
$$
\mu_{\G}\left(L(x+u),\mu_{\G}(y+v,z+w)\right)=\mu_{\G}\left(k(x),\mu(y,z)\right)=\mu(T(x),\mu(y,z)),
$$
and this implies that the triple $\left(\G,\mu_{\G},L\right)$ is a Hom-Lie algebra.
Now \eqref{restriccion k} says that
$L\vert_{\g}=k$ and $V \subset \Ker L$. 
Then, from
\textbf{Claim A5} and \eqref{relacion entre las metricas con k}
in the proof of \textbf{Thm.\,\ref{teorema nuevo 2}} we have,
$B(x,y)\!=\!B_{\G}(k(x),y+v)\!=\!B_{\G}(L(x),y+v)\!=\!B_{\G}(L(x+u),y+v)$. 
Now \eqref{dieciseis} follows
using the same arguments and the symmetry of $B$.

\medskip
Recall that 
\textbf{a morphism
$(\g_1,\mu_1,T_1)\to(\g_2,\mu_2,T_2)$
between two Hom-Lie algebras}
is a linear map $\psi:\g_1 \to \g_2$
such that $\psi(\mu_1(x,y))=\mu_2(\psi(x),\psi(y))$, for all $x,y$ in $\g_1$, 
and $\psi \circ T_1=T_2 \circ \psi$.
Now, $\pi(x+v)=x$ for any $x$ in $\g$ and $v$ in $V$.
Since, $\mu_{\G}(x+u,y+v)=\mu(x,y)$ already lies in $\g$, it follows that
$\pi(\mu_{\G}(x+u,y+v))=\mu\left(\pi (x+u),\pi(y+v)\right)$ 
and it is also true that $\pi\circ L(x+u)=T \circ \pi(x+u)$, 
for all $x,y$ in $\g$ and $u,v$ in $V$. 
Therefore, $\pi:(\G,\mu_{\G},L) \to (\g,\mu,T)$ is an
epimorphism of Hom-Lie algebras. 
Since $\mu_{\G}(\G,V)=\{0\}$ and $L(V)=\{0\}$, 
we may say that
$(\G,\mu_{\G},L)$ \textbf{is a central extension of}
$(\g,\mu,T)$ by $V$ as Hom-Lie algebras.
\end{proof}

\medskip
\begin{Remark}\label{Remark a twist map}{\rm
Observe that $\Ker(L)=V$
and that $\Im(L)=\Im(k)=V^\perp$,
which follows from \eqref{restriccion k}, \eqref{dieciseis} and \textbf{Claim A4}.}
\end{Remark}

\medskip
We now know from \textbf{Thm.\,\ref{teorema nuevo 2}} that
$B_{\G}:\G \times \G \to \F$ as defined in
\eqref{definicion de metrica en extension central}
is invariant for the Lie bracket $[\,\cdot\,,\,\cdot\,]_{\G}$, and 
from \eqref{dieciseis} in \textbf{Prop.\,\ref{teorema nuevo 3}}
we also know that
$L$ as defined in \eqref{restriccion k} is $B_{\G}$-self-adjoint.
It is now natural to ask whether or not ---or under what conditions---
$B_{\G}$ is $\mu_{\G}$-invariant and to also ask
whether or not $L$ 
lies in the centroid of
the Hom-Lie product $\mu_{\G}$.
These questions are settled down in the following:

\medskip
\begin{Cor}\label{corolario cociclos}{\sl
Under the hypotheses of \textbf{Prop.\,\ref{Prop cociclos}} 
and \textbf{Prop.\,\ref{teorema nuevo 3}},

\medskip
\textbf{(i)} If $B_{\G}$ is invariant under $\mu_{\G}$ then the 2-cocycle $\theta$ is zero. 

\medskip
\textbf{(ii)} $L$  
is a centroid of $\mu_{\G}$
if and only if $\theta$ is zero.}
\end{Cor}

\medskip
\begin{proof}
We shall adhere ourselves to the following notation. 
For a given non-degenerate symmetric bilinear form 
$B:\g\times\g\to\mathbb F$ on a given $\F$-space $\g$,
let $B^{\flat}:\g\to\g^\ast$ be the linear map defined by
$B^{\flat}(x)\,(y)=B(x,y)$, for all $x$ and $y$ in $\g$.
Since $\dim\g$ is assumed to be finite, $B^{\flat}$
is an isomorphism and so is its inverse map
$B^\sharp:\g^*\to\g$, characterized by the property,
$B\left(B^\sharp(\xi),x\right)=\xi(x)$, for any $\xi\in\g^\ast$
and any $x\in\g$. In what follows we shall use the already defined
non-degenerate symmetric bilinear forms $B$ and $B_{\G}$
on $\g$ and $\G$, respectively.

\medskip
\textbf{(i)} Now, let $\iota_{\g}:\g \hookrightarrow \G$ 
be the inclusion map and define,
\begin{equation}\label{definicion h}
h=B^{\sharp}\circ\,\iota_{\g}^*\,\circ\,B_{\G}^{\flat} :\G\to\g,
\end{equation}
where the dual map $S^\ast:V^\ast\to U^\ast$ of any linear map
$S:U\to V$ is defined by $S^\ast(\phi)=\phi\circ S$, for any $\phi\in V^\ast$.
It follows from \textbf{Claim A5} and the non-degeneracy of $B$ and $B_{\G}$ that:

\medskip
\begin{itemize}
\item[\textbf{(Q1)}] $h \circ k=\operatorname{Id}_{{\g}}$.

\medskip
\item[\textbf{(Q2)}] $B_{\G}(x+v,y)=B(h(x+v),y)$ for all $x,y \in \g$ and $v \in V$.

\medskip
\item[\textbf{(Q3)}] $\a=\Ker(h)={\g}^{\perp}$.

\medskip
\item[\textbf{(Q4)}] $\G=\Ker(h) \oplus \Im(k)$.
\end{itemize}

\medskip
We know from \eqref{corchete de extension central-1}
that $k(\mu(x,y))=[x,y]_{\G}$. Then \textbf{(Q1)} implies that,
\begin{equation}\label{mu y h}
\mu(x,y)=h([x,y]_{\G}),\text{ for all }x,y \in \g.
\end{equation}
On the other hand, from \textbf{(Q2)} one obtains,
\begin{equation}\label{B y h}
B_{\G}(h(x+u),y+v)=B_{\G}(x+u,h(y+v)),
\end{equation}
for all $x,y$ in $\g$ and all $u,v$ in $V$.
We can now prove the first part of \textbf{(i)}; that is,
if $B_{\G}$ is $\mu_{\G}$-invariant, then the 2-cocycle 
$\theta$ in \eqref{expresion para corchete de extension central 2}
has to be zero.
Indeed, let $x$ be in $\g$ and $y,z$ be in $\G$. 
The $\mu_{\G}$-invariance of $B_{\G}$, together with 
\eqref{mu y h}-\eqref{B y h}, the fact that
$\mu_{\G}\vert_{\g \times \g}=\mu$ (\textbf{Prop.\,\ref{teorema nuevo 3}}) 
and the invariance of $B_{\G}$ under $[\,\cdot\,,\,\cdot\,]_{\G}$,
allow us to write,
$$
B_{\G}(z,\mu_{\G}(x,y))\!=
\!\!B_{\G}(\mu_{\G}(z,x),y)\!=
\!\!B_{\G}\left(h\left([z,x]_{\G}\right),y\right)\!=
\!\!B_{\G}\left(z,[x,h(y)]_{\G} \right).
$$
The non-degeneracy of $B_{\G}$ and 
\eqref{expresion para corchete de extension central 2}
then imply that,
\begin{equation}\label{trece}
h\left([x,y]_{\G}\right)=\mu_{\G}(x,y)=[x,h(y)]_{\G}=[x,h(y)]+\theta(x,h(y))
\end{equation}
Since $\Im(h)=\g \subset \G$, equations 
\eqref{expresion para corchete de extension central 2} and \eqref{trece}
show that $\theta(x,h(y))$ lies in $\g \cap V=\{0\}$. 
Being $h$ is surjective and $x$ and $y$ arbitrary,
one concludes that $\theta=0$. 

\medskip
\textbf{(ii)} We now prove that,
if $L$ is a centroid of $\mu_{\G}$,
then $\theta$ is zero. Take $x,y$ in $\g$ and $u,v$ in $V$. 
From \textbf{Prop.\,\ref{teorema nuevo 3}.(i)} we have, 
$$
L(\mu_{\G}(x+u,y+v))-\mu_{\G}(L(x+u),y+v)=k(\mu(x,y))-\mu_{\G}(k(x),y).
$$
Now, using 
\eqref{corchete de extension central-1}, 
\eqref{expresion para corchete de extension central 2} and 
\eqref{mu y h}, together with the fact that 
$\mu_{\G}\vert_{\g \times \g}=\mu$, we obtain,
$$
\aligned
k(\mu(x,y))-\mu_{\G}(k(x),y)&=[x,y]_{\G}-h([k(x),y]_{\G})\!=\![x,y]_{\G}\!-\!h(k([x,y]))\\
\,&=[x,y]_{\G}-[x,y]=\theta(x,y),\\
\endaligned
$$
where use has been made of
$k([x,y])=[k(x),y]_{\G}$ and $h\circ k=\Id_{\g}$
(see \textbf{Claim A1} and \textbf{(Q1)}). 
Thus, $L(\mu_{\G}(x+u,y+v))-\mu_{\G}(L(x+u),y+v)=\theta(x,y)$,
which shows that 
$L$ is a centroid
precisely when $\theta=0$.
\end{proof}

\medskip
\textbf{Remark.} 
Our next result provides a method to generate quadratic
Hom-Lie algebras on an underlying vector space of the
form $\g=\h\oplus\h^\ast$, when a Hom-Lie algebra
$(\h,\nu,S)$ has been given with a centroid
$S$. In fact, we shall close this section by applying
such a method to a $6$-dimensional $\h$, 
which is in turn interesting in itself, as its
Hom-Lie product $\nu:\h\times\h\to\h$ {\it is not a Lie bracket\/.}

\medskip
\begin{Prop}\label{construccion}{\sl
Let $(\h,\nu,S)$ be a Hom-Lie algebra with $S$ 
a centroid for $\nu$.
Let $\nu^{\ast}:\h \to \gl(\h^{\ast})$ be the linear map for which 
$\nu^{\ast}(x)(\alpha)\in\h^\ast$, for any $x\in\h$ and any $\alpha\in\h^\ast$,
is the linear map $y\mapsto -\alpha(\nu(x,y))$, for any $y$ in $\h$.
Let $\g=\h\oplus\h^\ast$ and define $\mu:\g\times\g\to\g$ by,
$$
\mu(x+\alpha,y+\beta)=\nu(x,y)+\nu^{\ast}(x)(\beta)-\nu^{\ast}(y)(\alpha).
$$
Let $T:\g\to\g$ be given by, $T(x+\alpha)=S(x)+S^{\ast}(\alpha)$, 
and let $B:\g\times\g\to{\mathbb F}$ be defined by
$B(x+\alpha,y+\beta)=\alpha(y)+\beta(x)$. 
Then, $(\g,\mu,T,B)$ is a quadratic Hom-Lie algebra,
having $T$ in its centroid\/.}
\end{Prop}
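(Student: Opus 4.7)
The plan is to check the defining axioms one by one, leaving the Hom-Jacobi identity for $\mu$ as the single non-trivial point. A preliminary observation I would record before starting is that $\nu$-equivariance of $S$ together with skew-symmetry of $\nu$ forces the two-sided relation $\nu(S(x),y)=S(\nu(x,y))=\nu(x,S(y))$. This is what makes every identity involving $S^{\ast}$ close on the dual side.

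First I would dispose of the easy axioms. Skew-symmetry of $\mu$ is immediate from the skew-symmetry of $\nu$ and the symmetric way the two $\nu^{\ast}$-terms enter the formula. Symmetry and non-degeneracy of $B$ are also immediate, since $B$ is the canonical hyperbolic pairing on $\h\oplus\h^{\ast}$. Self-adjointness of $T$ under $B$ reduces to a tautology after unfolding the definition $S^{\ast}(\alpha)(\,\cdot\,)=\alpha(S(\,\cdot\,))$, and $\mu$-invariance of $B$ likewise unfolds to the same symmetric expression on both sides by direct use of the formula for $\nu^{\ast}$. For $\mu$-equivariance of $T$, the $\h$-component is literally the hypothesis that $S$ is $\nu$-equivariant, while the $\h^{\ast}$-component reduces, after evaluation on a test vector, to exactly the two-sided equivariance recorded above.

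The main obstacle is the Hom-Jacobi identity
$$
\mu(T(X),\mu(Y,Z))+\mu(T(Y),\mu(Z,X))+\mu(T(Z),\mu(X,Y))=0.
$$
By multilinearity, it suffices to test pure elements from $\h$ and $\h^{\ast}$. The cases with two or three arguments in $\h^{\ast}$ are trivial because $\mu$ vanishes on $\h^{\ast}\times\h^{\ast}$ and hence every summand contains such a factor. With all three arguments in $\h$, the $\h$-component of the identity is exactly the Hom-Jacobi identity of $(\h,\nu,S)$, and the $\h^{\ast}$-component is zero for the same reason. The delicate case is $X=x$, $Y=y$ in $\h$ and $Z=\gamma$ in $\h^{\ast}$. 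Here the left-hand side is a linear functional on $\h$, and evaluation on $z\in\h$ reduces, after unfolding $\nu^{\ast}$ and $S^{\ast}$, to
$$
\gamma\bigl(\nu(y,\nu(S(x),z))-\nu(x,\nu(S(y),z))+S(\nu(\nu(x,y),z))\bigr).
$$
I would then use the two-sided equivariance to rewrite the first two terms as $\nu(S(y),\nu(x,z))-\nu(S(x),\nu(y,z))$ and the third term as $-\nu(S(z),\nu(x,y))$; at that point the Hom-Jacobi identity for $\nu$ applied to $(x,y,z)$ collapses the expression to zero. Having handled every type of pure input, the Hom-Jacobi identity for $\mu$ follows by multilinearity, completing the proof.
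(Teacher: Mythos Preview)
Your proof is correct and follows essentially the same approach as the paper: both establish $\mu$-equivariance of $T$ and the Hom--Jacobi identity by evaluating the $\h^{\ast}$-component on a test vector $z\in\h$ and reducing to the corresponding identity in $(\h,\nu,S)$ via the two-sided relation $\nu(S(x),y)=S(\nu(x,y))=\nu(x,S(y))$. Your write-up is in fact a bit more thorough than the paper's, since you explicitly record the preliminary two-sided equivariance, check $\mu$-invariance of $B$, and dispose of the degenerate Hom--Jacobi cases (two or three arguments in $\h^{\ast}$) that the paper leaves implicit.
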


\medskip
\begin{proof}
We shall prove first 
that $T$ is a centroid.
For $x$ and $y$ in $\h$ and $\alpha$ in $\h^{\ast}$, we have,
on the one hand, $T(\mu(x,\alpha))(y)=-\alpha(\nu(x,S(y)))$.
On the other hand, it is clear that,
$$
\mu(T(x),\alpha)(y)=-\alpha(\nu(S(x),y)),\ \,\text{and}\ \,
\mu(x,T(\alpha))(y)=-\alpha(S(\nu(x,y))).
$$
Whence, $T(\mu(x,\alpha))=\mu(T(x),\alpha)=\mu(x,T(\alpha))$.

\medskip
We shall now prove that the Hom-Lie Jacobi identity 
\eqref{primera Hom-Lie Jacobi identity} holds true for $\mu$ and $T$. 
Take $x,y,z$ in $\h$ and $\alpha$ in $\h^{\ast}$. 
Using the fact that $T$ is a centroid,
one obtains,
$$
\aligned
&\mu(T(x),\mu(y,\alpha))(z)+\mu(T(y),\mu(\alpha,x))(z)+\mu(T(\alpha),\mu(x,y))(z)\\
&=\alpha\left((\nu(S(y),\nu(x,z))+ (\nu(S(x),\nu(y,z))+ (\nu(S(z),\nu(y,x))\right)=0.
\endaligned
$$
Thus, $\mu(T(x),\mu(y,\alpha))+\mu(T(y),\mu(\alpha,x))+\mu(T(\alpha),\mu(x,y))=0$.
The fact that $T$ is self-adjoint for the given $B$
follows from $B(S(x),\alpha)=\alpha(S(x))=S^{\ast}(\alpha)(x)=B(x,S^{\ast}(\alpha))$,
 since $T(x)=S(x)$ and $T(\alpha)=S^\ast(\alpha)$.
\end{proof}

\medskip
\textbf{An example with a $6$-dimensional Hom-Lie algebra 
$(\h,\nu,S)$ with $\nu$-equivariant $S$
and $\nu$ not a Lie bracket.}

\medskip
Let $\h=\operatorname{Span}_{\F}\{x_1,x_2,x_3,y_1,y_2,y_3\}$ 
with the skew-symmetric bilinear product $\nu:\h \times \h \to \h$ defined by
$\nu(x_1,x_2)\!=\!\nu(y_3,x_1)\!=\!\nu(x_3,y_1)\!=\!y_2$, 
$\nu(x_3,x_1)\!=\!y_1$, and all other products on the basis being equal to zero. 
Let $S:\h \to \h$ be the linear map defined by,
$$
S(x_1)=y_1,\ S(x_2)=y_2,\ S(x_3)=y_3,\, S(y_1)=y_2,\ S(y_2)=S(y_3)=0.
$$
Observe that both, $\nu(\h,\h)$ and $\Im(S)$, lie in 
$\operatorname{Span}_{\F}\{y_1,y_2,y_3\}$, and 
$\nu(y_j,y_k)=0$, for all $1\le j,k\le 3$. 
It is easy to check that $(\h,\nu,S)$ is a 6-dimensional Hom-Lie algebra,
and that $S$ is 
in the centroid.
Also observe that $S$ is nilpotent with $\dim\Ker(S)=3$. In addition,
$$
\nu(x_1,\nu(x_2,x_3))+\nu(x_2,\nu(x_3,x_1))+\nu(x_3,\nu(x_1,x_2))=-y_2 \neq 0,
$$
which proves that $\nu$ is not a Lie bracket. 
Applying \textbf{Prop.\,\ref{construccion}}, we get a 12-dimensional 
quadratic Hom-Lie algebra $(\h \oplus \h^{\ast},\mu,T,B)$, 
where $T$ is 
in the centroid
and $\dim\Ker(T)=6$. 

\medskip
Let $\h^{\ast}=\operatorname{Span}_{\F}\{\alpha_1,\alpha_2,\alpha_3,\beta_1,\beta_2,\beta_3\}$, 
where $\alpha_{i}(x_j)=\beta_{i}(y_j)=\delta_{ij}$, for all $1 \leq i,j \leq 3$. 
Write $\g=\h \oplus \h^{\ast}$. 
From the prescription of \textbf{Prop.\,\ref{construccion}}
we obtain the Hom-Lie product $\mu$ on $\g$ given by,
$$
\aligned
& \mu(x_1,x_2)=\mu(y_3,x_1)=\mu(x_3,y_1)=y_2,\,\,\,\mu(x_2,x_3)=y_3,\,\,\,\mu(x_3,x_1)=y_1,\\
& \mu(x_1,\beta_2)=-\alpha_2+\beta_3,\,\,\,\,\,\,\,\mu(x_2,\beta_3)=\mu(\beta_1,x_1)=\mu(y_1,\beta_2)=-\alpha_3,\\
& \mu(x_2,\beta_2)=\mu(\beta_1,x_3)=\mu(\beta_2,y_3)=\alpha_1,\,\,\,\,\mu(x_3,\beta_2)=\beta_1,\,\,\,\mu(x_3,\beta_3)=\alpha_2,
\endaligned
$$
and the twist map $T:\g \to \g$ given by,
$$
\aligned
& T(x_1)=y_1,\,\,\,T(x_2)=y_2,\,\,\,T(x_3)=y_3,\,\,\,T(y_1)=y_2,\,\,\,T(y_2)=T(y_3)=0,\\
& T(\beta_1)=\alpha_1,\,\,\,T(\beta_2)=\alpha_2+\beta_1,\,\,\,T(\beta_3)=\alpha_3.
\endaligned
$$
It is easy to determine the Lie algebra bracket
$[x,y]=T(\mu(x,y))$ for all $x,y$ in $\g$. In fact,  
the only non-trivial brackets are:
$[x_3,x_1]=y_2$, $[x_1,\beta_2]=\alpha_3$ and $[x_3,\beta_2]=\alpha_1$. 
Thus $(\g,[\,\cdot\,,\,\cdot\,])$ is 2-step nilpotent Lie algebra
with 
$C(\g)=\operatorname{Span}
\{x_2,y_1,y_2,y_3,\alpha_1,\alpha_2,\alpha_3,\beta_1,\beta_3\}$, 
and $[\g,\g]=\operatorname{Span}\{y_2,\alpha_1,\alpha_3\}$.

\medskip
\section{Hom-Lie algebras and algebras having no non-trivial two-sided ideals}

\medskip
The main result of this section is that we can 
recover the {\it Hom-Lie algebra structure\/} in $\G=\g \oplus V$
given in \textbf{Prop.\,\ref{teorema nuevo 3}}, from an algebra $\mathcal{A}$
having no non-trivial two-sided ideals. 
In order to accomplish this goal, we shall prove first 
the existence of a bilinear product $(x,y)\mapsto xy$ in $\G$ that 
realizes  $\mu_{\G}$ in the form $\mu_{\G}(x,y)=xy-yx$.
This product will help us to construct the algebra $\mathcal{A}$.

\medskip
\begin{Prop}\label{proposicion conexion}{\sl
Let $(\g,\mu,T,B)$ be a quadratic Hom-Lie algebra
with $T$ in the centroid and $\dim\Ker(T)>0$. 
Choose a vector space $V$ with $\dim V=\dim\Ker(T)$
to produce the
quadratic Lie algebra $\left(\G=\g \oplus V,[\,\cdot\,,\,\cdot\,]_{\G},B_{\G}\right)$ 
of \textbf{Thm.\,\ref{teorema nuevo 2}}, and let $(\G,\mu_{\G},L)$
be the Hom-Lie algebra $(\G,\mu_{\G},L)$ of \textbf{Prop.\,\ref{teorema nuevo 3}}. 
There is a bilinear product $\G\times\G\to\G$, 
denoted by $(x,y)\mapsto xy$ (for all $x$ and $y$ in $\G$),
such that,

\medskip
\textbf{(i)} $\mu_{\G}(x,y)=xy-yx$.

\medskip
\textbf{(ii)} $2\,xy=\mu_{\G}(x,y)-[h(x),y]_{\G}+[x,h(y)]_{\G}$, 
with $h:\G \to \g$ as in \eqref{definicion h},

\medskip
\textbf{(iii)} $x^{2}=0$ for all $x$ in  $\Ker(h) \cup \Im(L)$, 
and $\G=\Ker(h) \oplus \Im(L)$.}
\end{Prop}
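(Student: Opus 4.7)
The plan is to take formula (ii) as the \emph{definition} of the product: set
\[
xy \;:=\; \tfrac{1}{2}\bigl(\mu_{\G}(x,y)-[h(x),y]_{\G}+[x,h(y)]_{\G}\bigr),
\]
for $x,y\in\G$, where $h:\G\to\g$ is the map defined in \eqref{definicion h}. This is manifestly a bilinear map $\G\times\G\to\G$, and (ii) holds by construction. Assertion (i) is then a one-line computation on $xy-yx$: the ``$[h(\cdot),\cdot]_{\G}$'' contributions pair up and cancel by skew-symmetry of $[\,\cdot\,,\,\cdot\,]_{\G}$, leaving exactly $\mu_{\G}(x,y)$.

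For the decomposition $\G=\Ker(h)\oplus\Im(L)$ in (iii), I would invoke \textbf{(Q4)} directly, combined with the identity $\Im(L)=\Im(k)$ recorded in \textbf{Remark \ref{Remark a twist map}}. Vanishing of $x^{2}$ on $\Ker(h)$ is then immediate: $h(x)=0$ and $\mu_{\G}(x,x)=0$ kill all three summands in the definition.

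The only nontrivial step is the vanishing of $x^{2}$ on $\Im(L)$. Writing $x=k(y)$ with $y\in\g$, \textbf{(Q1)} gives $h(x)=y$, while $k(y)-T(y)\in V\subset C(\G)$ together with the identity $[\,\cdot\,,\,\cdot\,]_{\G}\vert_{\g\times\g}=k\circ\mu$ (see \eqref{corchete de extension central-1}) reduces $[y,k(y)]_{\G}$ to $k(\mu(y,T(y)))$. The crux is then $\mu(y,T(y))=0$, which follows from the $\mu$-equivariance of $T$ and the skew-symmetry of $\mu$:
\[
\mu(y,T(y))\;=\;-\mu(T(y),y)\;=\;-T(\mu(y,y))\;=\;0.
\]
This identity is the main---albeit modest---obstacle and is the point at which the equivariance hypothesis is genuinely used; once it is in place, $[y,k(y)]_{\G}=0$, the remaining term $[k(y),y]_{\G}$ vanishes by skew-symmetry, and hence $x^{2}=0$.
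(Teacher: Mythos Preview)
Your argument is correct, but the route differs from the paper's in two places worth noting. The paper \emph{defines} the product by the Koszul-type formula
\[
2B_{\G}(xy,z)=B_{\G}(\mu_{\G}(x,y),z)+B_{\G}(\mu_{\G}(z,x),y)+B_{\G}(\mu_{\G}(z,y),x),
\]
and then \emph{derives} (ii) from it using \eqref{mu y h}, \eqref{B y h} and the $[\,\cdot\,,\,\cdot\,]_{\G}$-invariance of $B_{\G}$; you reverse this and simply take (ii) as the definition, which is perfectly legitimate and shorter. For the vanishing of squares on $\Im(L)$, the paper proves the stronger identity $2\,L(x+u)\,L(y+v)=T([x,y])$ (their \textbf{Claim B1}), whereas you obtain $k(y)^{2}=0$ directly from $[y,k(y)]_{\G}=k(\mu(y,T(y)))=0$ via $\mu$-equivariance. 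Your shortcut is cleaner for the proposition as stated; the paper's longer computation, however, is not wasted, since the full identity $2\,L(x)\,L(y)=T([x,y])$ is invoked repeatedly in the subsequent proof of \textbf{Thm.~\ref{teorema}} (the simplicity of $\mathcal{A}$), so if you continue in that direction you will eventually need something like \textbf{Claim B1} anyway.
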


\medskip
\begin{proof}
Define the bilinear map $(x,y)\mapsto xy\in \G$ by means of,
\begin{equation}\label{0}
2B_{\G}(xy,z)\!=\!B_{\G}\left(\mu_{\G}(x,y),z\right)\!
+\!B_{\G}\left(\mu_{\G}(z,x),y\right)\!+\!B_{\G}\left(\mu_{\G}(z,y),z\right),
\end{equation}
for all $x,y$ and $z$ in $\G$. If $B_{\G}$ is invariant under 
$\mu_{\G}$, then $2\,x\,y=\mu_{\G}(x,y)$. 
By \textbf{Cor.\,\ref{corolario cociclos}.(i)}, this is the case 
only if the 2-cocycle $\theta$ is zero.
Now observe that \textbf{(i)} follows from \eqref{0}. 
To prove 
\textbf{(ii)}
take $x,y$ and $z$ in $\G$. 
The properties of $h$ given in \eqref{mu y h} and \eqref{B y h}, 
and the invariance of $B_{\G}$ under $[\,\cdot\,,\,\cdot\,]_{\G}$, 
lead to the following chain of equalities:
\begin{align}
2 \,B_{\G}(xy,z)=&B_{\G}(\mu_{\G}(x,y),z)+B_{\G}(h([z,x]_{\G}),y)+B_{\G}\left(h([z,y]_{\G}\right),x)\nonumber\\
=& B_{\G}(\mu_{\G}(x,y),z)+B_{\G}\left(z,[x,h(y)]_{\G}\right)+B_{\G}\left(z,[y,h(x)]_{\G}\right)\nonumber\\
=& B_{\G}\left(\mu_{\G}(x,y)+[x,h(y)]_{\G}-[h(x),y]_{\G},z\right),\nonumber
\end{align}
Therefore,
\begin{equation}\label{conexion explicita}
 2xy\!=\!\mu_{\G}(x,y)\!+\![x,h(y)]_{\G}\!-\![h(x),y]_{\G},\,\text{ for all }x,y \in \G.
\end{equation}
Finally,  to prove \textbf{(iii)}, 
we shall first show the following:

\medskip
\textbf{Claim B1.} {\it $2L(x+u)L(y+v)\!\!=\!\!T([x,y])$, for all $x,\!y$ in $\g$ and $u,\!v$ in $\!V\!$\/.}

\medskip
To prove this claim we shall use the fact that $L\vert_{\g}=k$ (see \eqref{restriccion k}),
together with $h \circ k=\Id_{\g}$ (see \textbf{(Q1)}), \textbf{Claim A1}, and 
the fact that $h([x,y]_{\G})=\mu(x,y)$ (see \eqref{mu y h}). 
Thus, by \eqref{conexion explicita} we get,
\begin{equation}\label{T primas}
{\aligned
2\,L(x+u) L(y+v) & =2k(x)k(y)\\
&=h\left([k(x),k(y)]_{\G}\right)+[k(x),y]_{\G}-[x,k(y)]_{\G}\\
&=h\left([k(x),T(y)]_{\G}\right)=h\left(k(\mu(T(x),T(y))\right)\\
&=\mu(T(x),T(y)).
\endaligned}
\end{equation}
Since $T$ is a centroid and $\mu(T(x),y)=[x,y]$, 
it follows from the right hand side of \eqref{T primas} that, $\mu(T(x),T(y))=T([x,y])$,
thus proving \textbf{Claim B1.} 
In particular, $L(x+u)^2=0$.

\medskip
To conclude the proof, we now only have to prove the following:

\medskip
\textbf{Claim B2.}
{\it $\,2\,a a^{\prime}=\mu_{\G}(a,a^{\prime})$, for all $a,a^{\prime}$ in $\a$\/.}

\medskip
Let $a$ and $a^{\prime}$ be in $\a=\Ker(h)$ (see \textbf{(Q3)}).
From \eqref{conexion explicita} we get, 
$2a a^{\prime}=\mu_{\G}(a,a^{\prime})$ and $a^2=0$. 
Therefore, $x^2=0$ for all $x$ in $\Im(L) \cup \Ker(h)$.
\end{proof}

\medskip
\begin{Theorem}\label{teorema}{\sl
Let $(\g,\mu,T,B)$ be a quadratic Hom-Lie algebra such that $T$ 
is a centroid and $\dim(\Ker(T))=r>0$. Let $(\G,[\,\cdot\,,\,\cdot\,]_{\G},B_{\G})$
be the quadratic Lie algebra of \textbf{Thm.\,\ref{teorema nuevo 2}}
and let $(x,y) \mapsto xy$ be the product in $\G$ of \textbf{Prop.\,\ref{proposicion conexion}}.
\medskip
\textbf{(i)} Let $\mathcal{A}=
\F \times \G$
and define the following multiplication in $\mathcal{A}$:
$$
(\xi,x)(\eta,y)=(\,\xi\, \eta+B_{\G}(x,y)\,,\,\xi\, y+\eta\, x+x\,y\,),
$$
for all $\xi$ and $\eta$ in $\F$ and all $x$ and $y$ in $\G$.
Then $\mathcal{A}$ is a non-abelian, and in general, a non-associative
algebra with unit element $1_{\mathcal{A}}=(1,0)$.

\medskip
\textbf{(ii)}
If $\mu$ is not a Lie product and $(\g,[\,\cdot\,,\,\cdot\,]=T \circ \mu)$
is not Abelian, then $\mathcal{A}$ has no non-trivial two-sided ideals. 
Furthermore, if $\left(\g,[\,\cdot\,,\,\cdot\,]\right)$ is nilpotent then $\mathcal{A}$ is simple.

\medskip
\textbf{(iii)} Let $\mathcal{A}^{\prime}=\mathcal{A}/\mathbb F\,1_{\mathcal{A}}$ 
and $\mu^{\prime}:\mathcal{A}^{\prime} \times \mathcal{A}^{\prime} 
\to \mathcal{A}^{\prime}$ be defined by,
$$
\mu^{\prime}\left((\xi,x)+\F 1_{\mathcal{A}},(\eta,y)+\F 1_{\mathcal{A}}\right)=
(\xi,x)(\eta,y)-(\eta,y)(\xi,x)+\F\, 1_{\mathcal{A}},
$$
for all $\xi$ and $\eta$ in $\F$ and all $x$ and $y$ in $\G$. 
Let $T^{\prime}:\mathcal{A}^{\prime} \to \mathcal{A}^{\prime}$ 
be the linear map defined by 
$T^{\prime}((\xi,x)+\F 1_{\mathcal{A}}))=(\xi,L(x))+\F 1_{\mathcal{A}}$. 
Then, $\left(\mathcal{A}^{\prime},\mu^{\prime},T^{\prime}\right)$ 
is a Hom-Lie algebra isomorphic to 
$\left(\G,\mu_{\G},L \right)$ of \textbf{Prop.\,\ref{teorema nuevo 3}}.}
\end{Theorem}

\medskip
\begin{proof}
\textbf{(i)} 
We define equality, addition and multiplication by scalars
in $\mathcal{A}$ in the obvious manner.
Then $(1,0)$ is the unit element $1_{\mathcal{A}}$ of $\mathcal{A}$.
Observe that the conditions for the product in $\mathcal{A}$ 
to be associative are in general too restrictive; namely, the
following identities would have to be satisfied for any $x$, $y$ and $z$ in $\G$:
\textbf{(i)} $x\,(y\,z) = (x\,y)\,z$;
\textbf{(ii)} $B_{\G}(x\,y,z) = B_{\G}(x,y\,z)$; and
\textbf{(iii)} $B_{\G}(x,y)\,z = B_{\G}(z,x)\,y$.
Similarly, for $\mathcal{A}$ to be abelian, it would have to be true
that $\mu_{\G}(x,y)=\mu_{\G}(y,x)$ for all $x$ and $y$ in $\G$;
since $\mu_{\G}$ is assumed to be skew-symmetric, this leads to
 $\mu_{\G}\equiv 0$.

\medskip
\textbf{(ii)} We shall prove that any non-trivial two-sided ideal in $\mathcal{A}$
contains the unit element $1_{\mathcal{A}}$. 
We proceed by proving the following statements:

\medskip
\textbf{Claim C1.} {\it $x L(y)$ belongs to $\Im(L)$ for all $x,y$ in $\G$\/.}

\medskip
We shall prove first that $x L(y)$ belongs to $\Im \left(L\right)$ 
for $x$ and $y$ in $\g$. Using the same arguments we used in
the proof of \textbf{Claim B1}, we have,
\begin{equation}\label{catorce}
\begin{split}
2\,x\,L(y)=2x\,k(y)&=h([x,k(y)]_{\G})+[x,y]_{\G}-[h(x),k(y)]_{\G}\\
\,&=h(k(\mu(T(x),y)))+[x,y]_{\G}-k([h(x),y])\\
\,&=T(\mu(x,y))+[x,y]_{\G}-k([h(x),y]).
\end{split}
\end{equation}
From \textbf{(P2)} and \textbf{Remark \ref{Remark a twist map}} we know that 
$[x,y]=T(\mu(x,y))$ belongs to $\Im(T) \subset V^{\perp}=\Im(k)=\Im(L)$. 
It follows from 
\eqref{corchete de extension central-1} that 
$[x,y]_{\G}-k([h(x),y])=k(\mu(x,y))-k([h(x),y]_{\G})$ belongs to $\Im(k)=\Im(L)$. 
It then follows from \eqref{catorce} that $2\,x\,L(y)=[x,y]+[x,y]_{\G}-k([h(x),y])$ 
belongs to $\Im(L)$, and $x L(y)$ is an element of $\Im(L)$. 
We may now prove that $uL(y)$ belongs to $\Im(L)$, when $u$ is in 
$V$ and $y$ in $\g$. 
Since $V$ is isotropic, $u$ is an element of $V^{\perp}=\Im(k)=\Im(L)$. 
Thus, one may find $z$ in $\g$ and $v$ in $V$, such that $u=L(z+v)=L(z)$. 
By \eqref{T primas} we deduce that $2uL(y)=L(z)L(y)=T([z,y])$,
which belongs to $\Im(T) \subset V^{\perp}=\Im(k)=\Im(L)$ 
(see  \textbf{(P2)} and \textbf{Remark \ref{Remark a twist map}}). 
Therefore, $(x+u)L(y)$ belongs to $\Im(L)$. 

\medskip
Let us now consider a two-sided ideal $I \neq \{0\}$ of $\mathcal{A}$.

\medskip
\textbf{Claim C2.} {\it $I$ has an element of the form $(\xi,L(x))$, with $\xi \neq 0$\/.}

\medskip
Let $(\xi,x+u) \neq 0$ be in $I$, with $\xi$ in $\F$, $x$ in $\g$ and $u$ in $V$.
If $\xi=0$, then $x+u$ is non-zero and there must be an element $y^{\prime}$
in $\G$ such that $B_{\G}(x+u,y^{\prime})=1$. 
Then $(0,x+u)(0,y^{\prime})=(1,(x+u)y^{\prime})$ belongs to $I$. 
Thus, we may assume from the start that $\xi$ is non-zero. 
Let $y$ be in $\g$. 
Since $B(x,y)=B_{\G}(x+u,k(y))=B_{\G}\left(x+u,L(y)\right)$
(see \eqref{restriccion k} and \eqref{dieciseis}), we obtain,
\begin{equation}\label{aux12}
\begin{split}
(\xi,x+u)(0,L(y))&=
\left(B_{\G}(x+u,L(y)),\xi L(y)+xL(y)\right)\\
\,&=\left(B(x,y),\xi L(y)+xL(y)\right) \in I.
\end{split}
\end{equation}
Now from \textbf{Claim C1} we know that $\xi L(y)+x L(y)$ belongs to $\Im(L)$. 
If $x \neq 0$ we may  choose a $y$ such that $B(x,y) \neq 0$ and the claim follows
for this case.
If $x=0$, then from \eqref{aux12} we have that $(0,\xi L(y))$ 
belongs to $I$, for all $y$ in $\g$ and $\xi \neq 0$. 
Then, $(0,L(y))(0,L(z))$ lies in $I$, for all $y$ and $z$ in $\g$.
Let $y$ and $z$ be such that $B_{\G}(L(y),L(z)) \neq 0$. 
It then follows from
\eqref{restriccion k}, \eqref{dieciseis} and \eqref{T primas} that,
\begin{equation}\label{diecisiete}
2\,(0,L(y))(0,L(z))=(2\,B_{\G}(L(y),L(z)),T([y,z])) \in I.
\end{equation}
From \textbf{(P2)} and \textbf{Remark\,\ref{Remark a twist map}}
we know that $\Im(T) \subset V^{\perp}=\Im(k)=\Im(L)$ and by \eqref{diecisiete} 
the claim follows in the general case.

\medskip
\textbf{Claim C3.} 
{\it If $(\xi,L(x))$ lies in $I$ with $-\xi^2\!+\!B_{\G}(L(x),L(x))\!\neq \!0$, 
then $(1,0)$ belongs to $I$\/.}

\medskip
Indeed.
By \textbf{Prop.\,\ref{proposicion conexion}.(iii)}, $L(x)^2=L(x)L(x)=0$.
Then,
 $$
(\xi,L(x))(-\xi,L(x))=\left(-\xi^2+B_{\G}(L(x),L(x)),0\right).
$$ 
If this is non-zero, then $(1,0)$ lies in $I$. 
Thus, we shall now make the following assumption:

\medskip
$[*]$ \textbf{Assumption.} {\it $\xi^2=B_{\G}(L(x),L(x))$, for all $(\xi,L(x))$ in $I$\/.}

\medskip
\textbf{Claim C4.} 
{\it Assuming $[*]$, if $(\xi,L(x))$ lies in $I$, with $\xi\ne 0$ and  $x$ in $\g$,
then $x$ belongs to $\Ker(T)=C(\g)$\/.}

\medskip
Since $L(x)L(y)=-L(y)L(x)$ (see \textbf{Claim B1}), 
for any $\left(\eta,L(y)\right)$ in $\mathcal{A}$, with $y$ in $\G$, 
we have that,
\begin{equation}\label{quince}
\begin{split}
(\xi,L(x))(\eta,L(y)) & +(\eta,L(y)(\xi,L(x))
\\
&=
2\,(
\xi \eta + B_{\G}(L(x),L(y)),\xi L(y)+\eta L(x)),
\end{split}
\end{equation}
belongs to $I$.
Now, using $[*]$, with $\xi$ replaced by $\xi \eta+B_{\G}(L(x),L(y))$ 
and $L(x)$ replaced by $\xi L(y)+\eta L(x)$, \eqref{quince} implies that,
\begin{equation}\label{polarization}
B_{\G}(L(x),L(y))^2=\xi^2B_{\G}(L(x),L(y)),\quad\text{ for all }y \in \G.
\end{equation}
By polarization this identity in turn implies that for any $y$ and $z$ in $\G$,
\begin{equation}\label{linealizacion}
B_{\G}\left(L(x),L(y)\right)
B_{\G}\left(L(x),L(z)\right)
=\xi^2B_{\G}\left(L(y),L(z)\right).
\end{equation}
Since $L$ is $B_{\G}$-self-adjoint and $B_{\G}$ is non-degenerate,
\begin{equation}\label{p1}
B_{\G}\left(L(x),L(y)\right)\,
L\left(L(x)\right)
=\xi^2\,L\left(L(y)\right),
\quad \text{for any\ } y \in \G.
\end{equation}
Knowing that $L\vert_{\g}=k$ (see \eqref{restriccion k}) 
and $V=\Ker(L)$ (see \textbf{Remark\,\ref{Remark a twist map}}),
we have,
\begin{align}
\label{p2} \,& L(L(y))=L(k(y)),\qquad\text{and,}\\
\label{p2-2} \,& B_{\G}(L(x),L(y))=B_{\G}(k(x),k(y)),\,\,\text{ for all }y \in \g.
\end{align}
Now substitute \eqref{p2} and \eqref{p2-2} in \eqref{p1}, to get,
\begin{equation}\label{jh}
B(k(x),k(y))\,k(T(x))=\xi^2k(T(y)),\quad \text{for all\ }y \in \g.
\end{equation}
Since $h \circ k=\operatorname{Id}_{\g}$, one may 
apply $h$ in \eqref{jh} to obtain,
\begin{equation}\label{last1}
B_{\G}(k(x),k(y))T(x)=\xi^2T(y),\quad \text{for all\ }y \in \g.
\end{equation}
We substitute $y$ by $\mu(x,y)$ in \eqref{last1} and 
apply the fact that $k(\mu(x,y))=[x,y]_{\G}$ (see \eqref{corchete de extension central-1}), 
together with $[k(x),x]_{\G}=k([x,x])=0$ (see \textbf{Claim A1}) and $T(\mu(x,y))=[x,y]$. 
Therefore,
\begin{equation}\label{last2}
\xi^2[x,y]=B_{\G}(k(x),[x,y]_{\G})T(x)=0,
\end{equation}
Since $\xi \neq 0$, \eqref{last2} says that $x$ lies in $C(\g)=\Ker(T)$
(see \textbf{Prop.\,\ref{Prop cociclos}.(iii)}), 
thus proving \textbf{Claim C4}. 
Also observe that $L\vert_{\g}=k$ and $V=\Ker(L)$
imply that $L^2(x)=k(T(x))=0$. 
Using this fact in \eqref{p1}, we obtain $L^2=0$
and therefore, $L^2(y)=k(T(y))=0$ for all $y$ in $\g$. 
Since $k$ is injective, $T=0$. 
Also, the definition \eqref{definicion de k} of $k$ implies that $k(\g)$ 
is contained in $V$. Since $[\G,V]_{\G}=\{0\}$, it follows that 
$\{0\}=[k(\g),\g]_{\G}=k([\g,\g])$ (see \textbf{Claim A1}).
And again, $k$ injective implies that the Lie algebra 
$(\g,[\,\cdot\,,\,\cdot\,])$ is Abelian, which is a contradiction.
Therefore, $[*]$ must be false; that is, there is an element $(\xi,L(x))$ in $I$ 
such that
$-\xi^2+B_{\G}\left(L(x),L(x)\right) \ne 0$,
which, by \textbf{Claim C3} implies that $(1,0)$ lies $I$. 

\medskip
Finally, we shall prove that if 
$(\g,[\,\cdot\,,\,\cdot\,])$ is a non-Abelian nilpotent Lie algebra,
then $\mathcal{A}$ is simple.
Let $I \neq \{0\}$ be a right ideal in $\mathcal{A}$ and
assume that $[*]$ is true.
Let $(\xi,L(x))$ be in $I$.
Using the same argument as in \textbf{Claim C2}, we may further assume 
that $\xi$ is non-zero. Take $y$ in $\g$ and $v$ in $V$.
From \eqref{conexion explicita} and \eqref{T primas} we have,
\begin{equation}\label{right 1}
(\xi,L(x))(0,2L(y))=(2B_{\G}(L(x),L(y)),2\,\xi\,L(y)+T\left([x,y]\right)).
\end{equation}
Now apply the assumption $[*]$ in this equation to get,
\begin{equation}\label{right-1-2}
\begin{split}
4B_{\G}(L(x),L(y))^2&=\!4\xi^2 B_{\G}(L(y),L(y))\!+\!4\xi B_{\G}(L(y),T([x,y]))\\
\,&+B_{\G}(T([x,y]),T([x,y])).
\end{split}
\end{equation}
Since $B_{\G}(T(x),T(y))=B(T(x),y)$
(see \eqref{definicion de metrica en extension central}),
$L\vert_{\g}=k$ and $\Im(T) \subset V^{\perp}$, we obtain,
\begin{equation}\label{sustitucion}
\begin{split}
& B_{\G}(L(y),T([x,y]))=B(T(y),T([x,y]))=0,\quad\text{ and, }\\
& B_{\G}(T([x,y]),T([x,y])=B([x,y],T([x,y])).
\end{split}
\end{equation}
Now substitute \eqref{sustitucion} in \eqref{right-1-2} 
and use the fact that $B_{\G}(L(x),L(y))=B(T(x),y)$
(see \eqref{definicion de metrica en extension central}) to further obtain that,
\begin{equation}\label{right 2}
4\,B(T(x),y)^2=4\,\xi^2B(T(y),y)+B([x,y],[T(x),y]),\text{ for all }y \in \g.
\end{equation}
Then, applying \eqref{right-1-2} to $y+z \in\G$, we obtain,
\begin{equation}\label{linearizacion 2}
\aligned
4\,B(T(x),y)B(T(x),z)\!=4\,\xi^2B(T(y),z)+B([x,T(y)],[x,z]),
\endaligned
\end{equation}
for all $y$ and $z$ in $\g$. 
Since $B$ is an invariant metric
on $(\g,[\,\cdot\,,\,\cdot\,])$ and $T$ is $B$-self-adjoint,
it follows from \eqref{linearizacion 2} that,
\begin{equation}\label{linearizacion 3}
4\,B(x,T(y))T(x)=4\,\xi^2 T(y)-[x,[x,T(y)]],\,\,\text{ for all }y \in \g.
\end{equation}
Now write $\mu(x,y)$ instead of $y$ in \eqref{linearizacion 3}, 
and use the fact that $T\circ \mu=[\,\cdot\,,\,\cdot\,]$ 
together with the fact that $B$ is invariant under $[\,\cdot\,,\,\cdot\,]$ to 
conclude that,
\begin{equation}\label{right 31}
0=4\,\xi^2 [x,y]-[x,[x,[x,y]]],\quad\text{ for all }y \in \g.
\end{equation}
If $\ad(x)^2=0$, then $4\,\xi^2 [x,y]=0$ by \eqref{right 31}. 
As $\xi \neq 0$ and $y$ is arbitrary, this implies that 
$x$ belongs to $C(\g)=\Ker(T)$ (see \textbf{Prop.\,\ref{Prop cociclos}.(iii)}). 
Now by \eqref{linearizacion 2}, $B(T(y),z)=0$ for all $y$ and $z$, so that $T=0$, 
and $k([\g,\g])=k(\mu(T(\g),\g))=\{0\}$ (see \eqref{definicion de k}). 
Finally, since $k$ is injective, $[\g,\g]=\{0\}$. 
On the other hand, if $\ad(x)^{2} \neq \{0\}$, 
\eqref{right 31} implies that $4\,\xi^2 \neq 0$ is an eigenvalue of 
$\ad(x)^{2}$ associated to the eigenvector $\ad(x)(y)$. 
But $(\g,[\,\cdot\,,\,\cdot\,])$ is nilpotent. Therefore, $\ad(x)^2$ 
is a nilpotent linear map and $\ad(x)(y)=0$, for all $y$ in $\g$,
which implies that $x$ belongs to $C(\g)=\Ker(T)$. 
We may now apply the same argument we used in the previous case
to conclude that $[\g,\g]=\{0\}$.

\medskip
In summary, if $[*]$ holds true, then $[\g,\g]=\{0\}$. 
Therefore, if $[\g,\g]\neq \{0\}$, then $[*]$ must be false 
and $\mathcal{A}$ has no non-trivial right-ideals. 
The proof for left-ideals is similar.

\medskip
\textbf{(iii)} From the definition of $\mu^{\prime}$ and 
the definition of the product $(x,y) \mapsto xy$, it follows that,
$\mu^{\prime}\left((\xi,x)+\F 1_{\mathcal{A}},
(\eta,y)+\F 1_{\mathcal{A}}\right)=(0,\mu_{\G}(x,y))+\F 1_{\mathcal{A}}$, 
for all $\xi, \eta$ in $\F$ and all $x,y$ in $\G$. 
Since $(\xi,x)+\F 1_{\mathcal{A}}=(0,x)+\F 1_{\mathcal{A}}$, 
the linear map 
$\psi:\mathcal{A}^{\prime} \to \G$, 
defined by $\psi\left((0,x)+\F 1_{\mathcal{A}}\right)=x$
gives the desired isomorphism between $(\mathcal{A}^{\prime},\mu^{\prime},T^{\prime})$ 
and $(\G,\mu_\G,L)$, and $\psi \circ T^{\prime}=L \circ \psi$.
\end{proof}

\medskip

\section{Invariant metrics on central extensions}

\medskip
To round up our results, we conclude as follows.
We have started
with a quadratic Hom-Lie algebra $(\g,\mu,T,B)$ having
$T$ in the centroid.
Besides, we have assumed right from the start that $\dim\Ker(T)>0$. 
Then, we showed how to produce a quadratic Lie algebra 
$(\g,[\,\cdot\,,\,\cdot\,],B)$, using actually the same $B$
and taking a vector space $V$ with $\dim V=\dim\Ker(T)$
we obtained a central extension $(\G=\g \oplus V,[\,\cdot\,,\,\cdot\,]_{\G})$
having an invariant metric $B_{\G}$ under which 
$V$ is isotropic. Now, our final result shows how from the
quadratic Lie algebra $(\G=\g \oplus V,[\,\cdot\,,\,\cdot\,]_{\G},B_{\G})$
which is a central extension of $(\g,[\,\cdot\,,\,\cdot\,])$
and with $V$ isotropic for its metric $B_{\G}$,
one may produce a quadratic Hom-Lie algebra $(\g,\mu,T,B)$
having $T$ in its centroid
and $\dim\Ker(T)=\dim V$.
In the course of the proof we shall use some results found in
\cite{GSV} which actually inspired us to address the problem
of the quadratic Hom-Lie algebras in the way we have presented it
in this work.

\medskip
\begin{Theorem}\label{teorema extensiones centrales}
{\sl Let $(\g,[\,\cdot\,,\,\cdot\,],B)$ be a quadratic Lie algebra
and let $V$ be an $r$-dimensional vector space. 
Let $(\G=\g \oplus V,[\,\cdot\,,\,\cdot\,]_{\G})$ be a central extension of 
$(\g,[\,\cdot\,,\,\cdot\,])$ by $V$. If $(\G,[\,\cdot\,,\,\cdot\,]_{\G})$ 
admits an invariant metric $B_{\G}$ where $V$ is isotropic, 
then there exists a quadratic Hom-Lie algebra $(\g,\mu,T,B)$ 
such that $[x,y]=T(\mu(x,y))=\mu(T(x),y)$, for all $x,y$ in $\g$
and $\dim\Ker(T)=r$\/.}
\end{Theorem}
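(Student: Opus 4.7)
The plan is to reverse the constructions of \textbf{Thm.~\ref{teorema nuevo 2}} and \textbf{Prop.~\ref{teorema nuevo 3}}: from the given central extension we read off the derivation data that defines the Hom-Lie structure on $\g$. The hypotheses match exactly those of the reference \cite{GSV} that was invoked in \textbf{Cor.~\ref{corolario cociclos}.(ii)}, and its structural results provide the bridge between the two constructions.

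\medskip

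\emph{Step 1: Read off the derivation data.} Let $\theta:\g\times\g\to V$ be the $2$-cocycle of the extension, so that $[x,y]_{\G}=[x,y]+\theta(x,y)$. By \cite{GSV} (Lemma 2.2 and Prop.~2.5, as used in the proof of \textbf{Cor.~\ref{corolario cociclos}.(ii)}), the invariance of $B_{\G}$ together with the isotropy of $V$ produce a basis $\{v_1,\dots,v_r\}$ of $V$, a dual system $\{a_1,\dots,a_r\}\subset\g$ with $B_{\G}(a_i,v_j)=\delta_{ij}$, and $B$-anti-self-adjoint derivations $D_1,\dots,D_r$ of $(\g,[\,\cdot\,,\,\cdot\,])$ such that
\[
\theta(x,y)\;=\;\sum_{i=1}^{r}B(D_i(x),y)\,v_i.
\]
Using the freedom in choosing the $a_i$'s modulo $V^{\perp}\cap\g$, together with the coboundary adjustments $D_i\mapsto D_i+\ad(x_i)$ of \textbf{Cor.~\ref{corolario cociclos}.(ii)}, I would arrange the further compatibilities $\a:=\Span\{a_1,\dots,a_r\}\subset C(\g)$, $D_i(\g)\subset\a$, $D_i\vert_{\a^{\perp_B}}=0$, and $B\vert_{\a\times\a}$ non-degenerate.

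\medskip

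\emph{Step 2: Define $T$ and $\mu$.} Let $T:\g\to\g$ be the $B$-orthogonal projector with $\Ker T=\a$ and $\Im T=\a^{\perp_B}$; then $T$ is $B$-self-adjoint with $\dim\Ker T=r$. Since $\a\subset C(\g)$, one has $[\g,\g]\subset\a^{\perp_B}=\Im T$, so $T$ fixes $[\g,\g]$ pointwise and commutes with every $\ad(y)$. Define $\mu:\g\times\g\to\g$ by the two rules
\[
\mu(a_i,y)\;:=\;D_i(y),\qquad \mu(T(z),y)\;:=\;[z,y],
\]
extended bilinearly. Well-definedness of the second rule uses $\a\subset C(\g)$ (the lift $z$ of $T(z)$ matters only modulo $\a=\Ker T$, and $[\a,\g]=0$), and skew-symmetry follows from the identities $D_i(a_j)=-D_j(a_i)$ and $D_i\vert_{\a^{\perp_B}}=0$ arranged in Step~1.

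\medskip

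\emph{Step 3: Verify the axioms.} The equalities $[x,y]=T(\mu(x,y))=\mu(T(x),y)$ are immediate from the two rules for $\mu$ combined with $D_i(y)\in\Ker T$ and $T([z,y])=[z,y]$. The Hom-Lie Jacobi identity \eqref{primera Hom-Lie Jacobi identity} reduces, after applying $T\circ\mu=[\,\cdot\,,\,\cdot\,]$ in the outer slot, to the ordinary Jacobi identity for $[\,\cdot\,,\,\cdot\,]$ together with the derivation property of the $D_i$'s; the $\mu$-invariance of $B$ reduces to the $[\,\cdot\,,\,\cdot\,]$-invariance of $B$ together with the $B$-anti-self-adjointness of the $D_i$'s. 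The main obstacle is Step~1: all of the combinatorial content is packaged into the compatibility properties of $a_i$ and $D_i$ supplied by \cite{GSV}, and once those are in hand Steps~2--3 amount to a routine check.
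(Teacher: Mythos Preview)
Your approach diverges from the paper's and has a genuine gap in Step~1. The paper defines $\mu(x,y)=h([x,y]_{\G})$ and $T=\pi_{\g}\circ k$ directly from the two metrics via $h=B^{\sharp}\circ\iota_{\g}^{*}\circ B_{\G}^{\flat}$ and $k=B_{\G}^{\sharp}\circ\pi_{\g}^{*}\circ B^{\flat}$, and then invokes Lemma~2.2 of \cite{GSV} to obtain $T\circ\rho(x)=\rho(x)\circ T=\ad(x)$ together with the Hom--Lie Jacobi identity. The resulting $T$ is \emph{not} in general idempotent; in the Section~4 example one has $T(a_j)=b_j$ and $T(b_j)=0$, so $T^{2}=0$.

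Your Step~1 asserts that one may arrange $\a\subset C(\g)$, $D_i(\g)\subset\a$, $D_i\vert_{\a^{\perp_B}}=0$, and $B\vert_{\a\times\a}$ non-degenerate. This fails already in the paper's own Section~4 example. There $V^{\perp}\cap\g={\frak b}=C(\g)$, so any system $\{a_i\}\subset\g$ with $B_{\G}(a_i,v_j)=\delta_{ij}$ spans a \emph{complement} to $C(\g)$ in $\g$; no shift by elements of $V^{\perp}\cap\g$ can move $\a$ into $C(\g)$. Moreover $C(\g)={\frak b}$ is $B$-isotropic, so no $r$-dimensional subspace of $C(\g)$ can carry a non-degenerate restriction of $B$, which rules out your conditions (1) and (4) simultaneously. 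The coboundary freedom $D_i\mapsto D_i+\ad(x_i)$ does not rescue condition (2) either: since $\Im\ad(x_i)\subset[\g,\g]={\frak b}$, the $\a$-component $a_{\sigma^{2}(i)}$ of $D_i(a_{\sigma(i)})=a_{\sigma^{2}(i)}+b_{\sigma^{2}(i)}$ survives every such adjustment, so $D_i(\g)\subset C(\g)$ is unattainable. In short, forcing $T$ to be an orthogonal projector imposes structural constraints on the central extension that are not part of the hypotheses; the correct $T$ must be read off intrinsically from $B$ and $B_{\G}$ as the paper does.
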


\begin{proof}
Let $h=B^{\sharp}\circ \iota_{\g}^{\ast} \circ B_{\G}^{\flat}:\G \to \g$, 
and $k=B^{\sharp}_{\G}\circ \pi^{\ast}\circ B^{\flat}:\g \to \G$,
where $\pi:\G\to\g$ is the canonical projection,
and $\iota_{\g}:\g\to\G$ the inclusion map.
For each $x$ in $\G$, let $\rho(x):\G \to \g$ be defined by 
$\rho(x)(y)=h([x,y]_{\G})$, for all $y$ in $\g$. 
Since $\g\subset\G$, we may now
define $\mu:\g \times \g \to \g$ by $\mu(x,y)=\rho(x)(y)$,
and $T:\g \to \g$ by $T=\pi \circ k$. 
\textbf{Lemma\,2.2} of \cite{GSV}, proves that
$T \circ \rho(x)=\rho(x)\circ T=\ad(x)$,
for each $x$ in $\g$, which implies that 
$T(\mu(x,y))=\mu(T(x),y)=[x,y]$; that is,  
$T$ is a centroid.
Moreover, \textbf{Lemma\,2.2\,(vii)} of \cite{GSV} proves that
$\rho([x,y])=\rho(x) \circ T \circ \rho(y)-\rho(y) \circ T \circ \rho(x)$, 
which is equivalent to, $\mu(T(\mu(x,y)),z)=\mu(T(x),\mu(y,z))+\mu(T(y),\mu(z,x))$. 
Since $T$ is a centroid,
the last expression can be rewritten as,
$$
\mu(T(x),\mu(y,z))+\mu(T(y),\mu(z,x))+\mu(T(z),\mu(x,y))=0.
$$
Whence $(\g,\mu,T)$ is a Hom-Lie algebra 
with $T$ in its centroid.
In addition, \textbf{Lemma\,2.2} of \cite{GSV} also 
shows $T$ is $B$-self-adjoint and that $\rho$ actually defines
a map $\rho:\G \to \Der(\g)$, making $\rho(x)$ anti-self-adjoint
with respecto to $B$ for each $x$ in $\G$. Therefore,
$B(\mu(x,y),z)=B(\rho(x)(y),z)=-B(y,\rho(x)(z))=-B(y,\mu(x,z))$, 
which proves that $B$ is $\mu$-invariant and that 
$(\g,\mu,T,B)$ is a quadratic Hom-Lie algebra. 
Finally, since $V$ is isotropic, \textbf{Lemma\,2.7\,(ii)} of \cite{GSV}
proves that $\dim\Ker(T)=\dim V$.
\end{proof}

\medskip
\section{Example}

We shall now illustrate with a non-trivial example how
\textbf{Thm.\,\ref{teorema extensiones centrales}} works.
Let $(\g,[\,\cdot\,,\,\cdot\,],B)$ be the 6-dimensional $2$-step nilpotent
quadratic Lie algebra whose underlying space is
decomposed in the form
 $\g=\a \oplus {\mathfrak b}$, with
$\a=\operatorname{Span}\{a_1,a_2,a_3\}$ and
${\mathfrak b}=\operatorname{Span}\{b_1,b_2,b_3\}$.
Let $\sigma$ be the cyclic permutation $\sigma=(1\,2\,3)$ and define
the Lie bracket $[\,\cdot\,,\cdot\,]$ on $\g$ by letting
$[a_i,a_{\sigma(i)}]=b_{\sigma^2(i)}$, for all $1 \leq i \leq 3$,
and $[\g,{\mathfrak b}]=\{0\}$. The invariant metric
$B:\g \times \g \to \F$ is given by
$B(a_i,b_j)=\delta_{ij}$ ($1\le i,j\le 3$).
Now define the $B$-anti-self-adjoint derivations
$D_1,D_2,D_3\in\Der(\g)$, through,
$$
\aligned
D_i\left(a_{\sigma(i)}\right)& =-D_{\sigma(i)}(a_i)=a_{\sigma^2(i)}+b_{\sigma^2(i)},
\\
D_{i}\left(b_{\sigma(i)}\right) & =-D_{\sigma(i)}(b_i)=b_{\sigma^2(i)},
\\
D_i\left(a_i\right)& =D_i\left(b_i\right)=0,\,\,1 \leq i \leq 3.
\endaligned
$$
Observe that if these derivations were inner, their image
would lie in $\operatorname{Span}\{b_1,b_2,b_3\}
=[\g,\g]$, which is not the case.

\medskip
Let $\G=\g \oplus V$, where  
$V=\operatorname{Span}\{v_1,v_2,v_3 \}$ is a $3$-dimensional vector space.
Define the 2-cocycle $\theta:\g \times \g \rightarrow V$ by, 
$$
\theta(x,y)=B\left(D_1(x),y\right)v_1+B\left(D_2(x),y\right)v_2+B\left(D_3(x),y\right)v_3,
$$
for all $x$ and $y$ in $\g$. 
Thus, $\theta\left(a_i,a_{\sigma(i)}\right)=v_{\sigma^2(i)}$, for all $1 \leq i \leq 3$. 
Then use $\theta$ to define the Lie bracket $[\,\cdot\,,\,\cdot\,]_{\G}$ on 
$\G=\g \oplus V$ as in \eqref{expresion para corchete de extension central 2}; 
that is,
$$
\aligned
\,\left[a_i,a_{\sigma(i)}\right]_{\G}& = b_{\sigma^2(i)}+v_{\sigma^2(i)},\\
\,\left[a_i,b_{\sigma(i)}\right]_{\G}&=v_{\sigma^2(i)}, \quad \text{ for all }1 \leq i \leq 3.
\endaligned
$$
The $9$-dimensional Lie algebra on $\G$ thus defined is
a $3$-step nilpotent. We define the invariant metric 
$B_{\G}:\G \times \G \rightarrow \F$, by, 
$$
\aligned
& B_{\G}\left(b_i,b_j\right)=B_{\G}\left(a_i,v_j\right)=\delta_{ij},\\
& B_{\G}\left(b_i,a_j+v_k\right)=B_{\G}\left(a_i,a_j\right)=B_{\G}\left(v_i,v_j\right)=0,\,1 \leq i,j,k \leq 3.
\endaligned
$$
Clearly, $\G$ has the vector space decomposition, 
$\G=\a \oplus {\mathfrak b} \oplus V$, where both, $\a$ and $V$, are isotropic
subspaces under $B_{\G}$, and ${{\mathfrak b}}^{\perp}=\a\oplus V$. 

\medskip
We shall now describe the Hom-Lie algebra
$(\g,\mu,T)$ of \textbf{Thm.\,\ref{teorema extensiones centrales}}.
The linear map $h:\G \to \g$ from \eqref{definicion h}
is given in this example by, 
$h(a_{j})=0$, $h(b_j)=a_{j}$ and $h(v_j)=b_j$, for all $1 \leq j \leq 3$.

Now, \textbf{Thm.\,\ref{teorema extensiones centrales}}
says that the map $\mu:\g\times\g\to\g$ is defined in terms of $h$
and the Lie bracket $[\,\cdot\,,\,\cdot\,]_{\G}$ as folows:
$\mu(x,y)=h([x,y]_{\G})$ for all $x$ and $y$ in $\g$. 
Since $V \subset C(\g)$, for any $x,y$ in $\g$ and $u,v$ in $V$, 
we may define $\mu_{\G}:\G \times \G \to \G$ 
by, $\mu_{\G}(x+u,y+v)=\mu(x,y)$. 

\medskip
Observe that ${\mathfrak b}=C(\g)$ is $B$-isotropic and 
$D_j({\mathfrak b}) \subset {\mathfrak b}$ for all $j$. Thus, 
$\mu({\mathfrak b},{\mathfrak b})=h\left([{\mathfrak b},{\mathfrak b}]_{\G}\right)=\{0\}$. 
Therefore, for all $1 \leq j,k \leq 3$ and for all $x$ in $\g$, 
the Hom-Lie bracket $\mu_{\G}:\G \times \G \to \G$ is given by,
$$
\aligned
\mu_{\G}(a_j,a_{\sigma(j)})&=h\left(\left[a_j,a_{\sigma(j)}\right]_{\G}\right)=D_j\left(a_{\sigma(j)}\right)=a_{\sigma^2(j)}+b_{\sigma^2(j)},\\
\mu_{\G}\left(a_j,b_{\sigma(j)}\right)&=h\left(\left[a_j,b_{\sigma(j)}\right]_{\G}\right)=D_j(b_{\sigma(j)})=b_{\sigma^2(j)},\\
\mu_{\G}\left(b_j,b_k\right)&=\mu_{\G}\left(v_j,x\right)=h\left(\left[v_j,x\right]_{\G}\right)=0.\\
\endaligned
$$
The linear map $T:\g\to\g$ that works as a twist map 
for $\mu:\g \times \g \to \g$ and completes the description of the 
$6$-dimensional Hom-Lie algebra $(\g,\mu,T,)$ is given by, 
$T(a_j)=b_j$ and $T(b_j)=0$, for all $1 \leq j \leq 3$.
 Then, the linear map $k:\g \to \G$ defined in \eqref{definicion de k}
is given by, $k(a_j)=b_j$ and $k(b_j)=v_j$, for all $1 \leq j \leq 3$. 

\medskip
The invariance of $B$ under the Hom-Lie bracket $\mu$ of $\g$,
is obtained from the fact that the derivations 
$D_1$, $D_2$ and $D_3$ are $B$-anti-self-adjoint 
and from the fact that ${\mathfrak b}$ is $\mu$-Abelian.
Thus, for any $x$ and $y$ in $\g$, we have, 
$B\left(\mu(a_j,x),y\right)=B\left(D_j(x),y\right)=-B\left(x,D_j(y)\right)=-B\left(x,\mu(a_j,x)\right)$.
On the other hand, the linear map $L:\G\to\G$
that defines the $9$-dimensional Hom-Lie algebra $\left(\G,\mu_{\G},L\right)$
of \textbf{Prop.\,\ref{teorema nuevo 3}} obtained 
as in \textbf{Thm.\,\ref{teorema extensiones centrales}}, is given by,
$L(x+v)=k(x)$ for all $x$ in $\g$ and all $v$ in $V$.
Then, $L(a_j) =b_j$, $L(b_j)=v_j$ and $L(v_j)=0$, for all $1 \leq k \leq 3$. 

\medskip
We shall describe the product $(x,y) \mapsto xy$ of \textbf{Prop.\,\ref{proposicion conexion}}
in terms of the basis $\{\,a_j,b_k,v_{\ell}\mid\,1 \le j,k, \ell \le 3\,\}$ of $\G$. 
It is shown there that, $2xy=\mu_{\G}(x,y)-[h(x),y]_{\G}+[x,h(y)]_{\G}$, 
for all $x $ and $y$ in $\G$. 
Then, $x\,u=u\,x=0$ for all $x$ in $\G$ and $u$ in $V$. 
Since $\mu_{\G}\left(a_j,\cdot\right)\vert_{\g}=D_j$ and $h(a_j)=0$ for all $j$, 
it follows that, $2a_jx=D_j(x)+\left[a_j,h(x)\right]_{\G}$ for all $x$ in $\g$. 
In particular, $2a_ja_k=D_j(a_k)=-D_k(a_j)=-2a_ka_j$ for all $1\le j,k\le 3$. 
On the other hand, $2a_1b_2=2b_3+v_3$, $2a_2b_3=2b_1+v_1$ and $2a_3b_1=2b_2+v_2$. 
Analogously, $2b_1a_2=v_3$, $2b_2a_3=v_1$ and $2b_3a_1=v_2$. 
Finally, this product on ${\mathfrak b}$ can be obtained as follows: 
Since $h(b_j)=a_j$, for all $1 \leq j \leq 3$, we get, 
$2b_jb_k=-\left[h(b_j),b_k\right]_{\G}+\left[b_j,h(b_k)\right]_{\G}
=-\left[a_j,b_k\right]_{\G}-\left[a_k,b_j\right]_{\G}$, for all $1 \leq j,k \leq 3$. 
Now, using again the fact that the derivations $D_j$ are
$B$-anti-self-adjoint,
it is not difficult to see that 
$\left[a_j,b_k\right]_{\G}=-\left[a_k,b_j\right]_{\G}$.
Therefore, $b_jb_k=0$, for all $1\le j,k\le 3$.

\medskip
\section*{Acknowledgements} 
The authors acknowledge the support received
through CONACYT Grant $\#$ A1-S-45886. The author RGD thanks the support 
provided by CONACYT post-doctoral fellowship 769309. Finally, GS acknowledges the
support provided by PROMEP grant UASLP-CA-228
and ASV acknowledges the support given by MB1411.

\end{document}